\date{}
\newtheorem{theorem}{Theorem}[section]
\newtheorem{definition}[theorem]{Definition}
\newtheorem{lemma}[theorem]{Lemma}
\newtheorem{question}[theorem]{Question}
\newenvironment{proof}[1][Proof]{\noindent\textbf{#1.} }{\ \rule{0.5em}{0.5em}}
\numberwithin{equation}{section}
\begin{document}

\title{On double Danielewski varieties \footnote{This work was supported by the NSFC (12371020, 12171194), NSFJP (20220101019JC, 20210101469JC).}}
\author{Xiaosong Sun\footnote{Corresponding author. E-mail: sunxs@jlu.edu.cn}, Shuai Zeng
\\
School of Mathematics\\
Jilin University, Changchun 130012, China\\} \maketitle

\begin{abstract}
In this paper, we study the double Danielewski varieties which arose from the research on the classical Cancellation Problem. We describe the Makar-Limanov invariant and locally nilpotent derivations of these varieties. And in a subsequent paper we will describe the automorphisms groups of the varieties and verify that the varieties are counterexamples to the Cancellation Problem.
\\{\bf{\emph{Key words}}}: Danielewski varieties, Locally nilpotent derivations, Makar-Limanov invariant
\\ {\bf{\emph{MSC}}}: 14R10, 13N15
\end{abstract}

\section{Introduction}\label{xxsec1}

The Cancellation Problem is a central problem in affine algebraic geometry (cf. \cite{eakin73}). For a ring $R$, we denote by $R^{[n]}$ the
polynomial ring over $R$ in $n$ variables.

\begin{question}[Cancellation Problem] \label{cp}
	Let $k$ be a field. If $F$ and $G$ are two finitely generated $k$-algebras such that $F^{[1]}\cong_{k}G^{[1]}$, then does it follow that $F\cong_{k} G$?
\end{question}

In 1989, Danielewski \cite{dan89} constructed a family of two-dimensional affine domains over the field of complex numbers $\mathbb{C}$ which are counterexamples of the above question. For a non-constant polynomial $P(z)$ with distinct roots, Danielewski considered the affine surface $S_n$ defined by $x^{n}y-P(z)=0$ in $\mathbb{A}_{k}^{3}$. Write $A(S_n)$ for the coordinate ring of $S_n$. It is known that for any pair $(m,n)$ with $m\neq n,$
$S_m\ncong_k S_n$ but $S_m\times \mathbb{A}_k^1\cong S_n\times \mathbb{A}_k^1$, equivalently,
$A(S_{m})\ncong_{k} A(S_{n}) $ but $A(S_{m})^{[1]}\cong_k A(S_{n})^{[1]}$ (cf. \cite{fie94}).

In 2001, Makar-Limanov \cite{makar01} determined the automorphism group of $S_{n}$ by computing the Makar-Limanov invariant of $S_{n}$.
The Makar-Limanov invariant (see Definition \ref{def-ml} below) is a powerful tool for investigating affine varieties, which was  introduced by Makar-Limanov \cite{makar96} in 1996. When the field $k$ is of characteristic zero, the Makar-Limanov invariant  of an affine variety $X$ over $k$ is the intersection of all the kernels of locally nilpotent derivations of the coordinate ring $A(X)$.

Nowadays, the term Danielewski surface refers to the surfaces $S_n$ above or refers to  more general surfaces $S_{P, n}$ defined by equations of the form $x^{n}y-P(x,z)=0$ with certain conditions on $P(x, z)$. These surfaces were studied extensively in literature in algebraic or geometric settings. For example, Crachiola \cite{cra} studied the case where $P(x,z)=y^2+\sigma(x)y$ with $n\geq 2$ and $\sigma(0)\neq 0$; and in 2009, Dubouloz and Poloni \cite{dub09} considered the case where $n\geq 2$ and $P(x,z)\in k[x,z]$ is a polynomial such that $P(0, z)$ splits with $r\geq 2$ simple roots in $k$, and they computed the automorphism groups of the surfaces $S_{P, n}$ in this case.

In 2019, Gupta and Sen \cite{gupta2019double} considered the so-called double Danielewski surfaces defined  by equations of the form $\{x^{r}y-P(x,z)=0, ~x^{s}t-Q(x,y,z)=0\}$ in $\mathbb{A}_{k}^{4}$, where $r\geq 2, s\geq 2, P(x,z)$ is monic in $z$ and $Q(x, y,z)$ is monic in $y$ with $\deg_{z}P(x,z)\geq 2, \deg_{y}Q(x,y,z)\geq 2$; they computed the Makar-Limanov invariant and isomorphism class of the surfaces and verified that they are counterexamples to the Cancellation Problem. And subsequently, they described in \cite{gupta2020locally}  all the locally nilpotent derivations of the double Danielewski surfaces when $k$ is of characteristic zero.

In 2007, Dubouloz \cite{dub07} considered a higher-dimensional analogue of Dan\-iel\-ewski surfaces. He constructed the Danielewski varieties defined by equation\[x_{1}^{d_{1}}\cdots x_{n}^{d_{n}}y-P(x_{1},\ldots,x_{n},z)=0\] in $\mathbb{A}_{\mathbb{C}}^{n+2}$, where $P(x_{1},\ldots, x_{n},z)$ is monic in $z$ and $\deg_{z}P>1$, and he proved that they are counterexamples to the Cancellation Problem. In 2023,  Ghosh and Gupta \cite{gho23} extended the result of Dubouloz to arbitrary field.

In this paper, we study a higher-dimensional analogue of the double Danielewski surfaces. More precisely, we consider a family of affine varieties $L_{[d],[e]}$ of arbitrary dimension greater or equal to 2 over any field $k$, defined by a pair of equations
\[\{\underline{x}^{[d]}y-P(\underline{x},z)=0,~ \underline{x}^{[e]}t-Q(\underline{x},y,z)=0\} \ \text{in}\  \mathbb{A}^{n+3}_{k},\]
where $\underline{x}=(x_1,x_2,\ldots,x_n)$, $[d],[e]\in \mathbb{Z}^{n}_{\geq 1}, n\in \mathbb{N}\backslash\{0\}, P(\underline{x}, z)$ is monic in $z$ and $Q(\underline{x},y,z)$ is monic in $y$ with $r:=\deg_{z}P(\underline{x},z)\geq 2,$ and $s:=\deg_{y}Q(\underline{x},y,z)\geq 2$. We
call them \emph{double Danielewski  varieties}. (When $r=1$ or $s=1$, $L_{[d],[e]}$ is a Danielewski variety). In Section 3, we determine the Makar-Limanov invariant of the double Danielewski varieties (Theorem \ref{thm: ML-invariantofL}). And in Section 4 we describe the locally nilpotent derivations of  these varieties when the field $k$ is of characteristic zero (Theorem \ref{thm: computeof LND}). And in a subsequent paper we will describe the automorphisms groups of the varieties and verify that they are counterexamples to the Cancellation Problem.

\section{Preliminaries}
In this section, we recall some notions and basic facts concerning exponential maps, Makar-Limanov invariant and associated graded rings.

Recall that, for a ring $R$, the notation $R^{[n]}$ means the polynomial ring over $R$ in $n$ variables.

\begin{definition} Let $R$ be a $k$-algebra over a field $k$ and let $\phi : R\rightarrow R^{[1]}$ be a $k$-algebra homomorphism. When $R^{[1]}=R[W]$ for an indeterminate $W$ we write $\phi_W$ for $\phi$. We say that $\phi$ is an exponential  map on $R$ if $\phi$ satisfies  the following properties:
	\begin{enumerate}
		\item $\epsilon_0\phi_{W}=\textup{id}_R$, where $\epsilon_0: R[W]\rightarrow R$ is the evaluation map at $W=0$.
		\item  $\phi_{T}\phi_{W}=\phi_{W+T}$, where $\phi_{T}: R\rightarrow R[T]$ is extended to a homomorphism $\phi_{T}: R[W]\rightarrow R[T,W]$ by setting $\phi_{T}(W)=W$.
	\end{enumerate}
\end{definition}

For an exponential map $\phi$ on $R$, the subring $R^{\phi}:=\{a\in R\ |\ \phi(a)=a\}$ of $R$ is called the invariants ring of $\phi$.
We say that $\phi$ is nontrivial if $R^{\phi}\neq R$. We denote by $\mathrm{EXP}(R)$ the set of all exponential maps on $R$.

\begin{definition} \label{def-ml} The Makar-Limanov invariant (ML invariant for short) of a ring $R$ is the subring defined by \[\mathrm{ML}(R):= \bigcap_{\phi\in \mathrm{EXP}(R)}R^{\phi}.\]
When $\mathrm{ML}(R)=R$, we say that $R$ is a rigid ring.
\end{definition}

The following are some well-known properties of exponential maps on an affine domain.

\begin{lemma}\cite{gupta2014zariski}\label{lem:expproperty} Let $R$ be an affine domain over a field $k$. Suppose that there exists a nontrivial exponential map $\phi$ on $R$. Then the following statements hold:
	
	\begin{enumerate}
		\item\label{factorialproperty} $R^{\phi}$ is factorially closed in $R$, i.e., given $f,g\in R\backslash \{0\}$ the condition $fg\in R^{\phi} $ implies $f\in R^{\phi}$ and $g\in R^{\phi}$.
		\item\label{algebraicclosedproperty} $R^{\phi} $ is algebraically closed in $R$.
		\item\label{localsliceproperty}  If $x\in R\backslash R^\phi$ is such that $\deg_{W}\phi_W(x)$ is of minimal positive degree, and $c$ is the leading coefficient of $W$ in $\phi_W(x)$, then $c\in R^{\phi}$ and $R[c^{-1}]=R^{\phi}[c^{-1}][x]$.
		\item\label{invariantdimension}  $\textup{trdeg}_{k}(R^{\phi})= \textup{trdeg}_{k}(R)-1$.
		\item \label{1dimensionpolynomialcrieterion} If $\textup{trdeg}_{k}(R)=1$ and $\tilde{k}$ is the algebraic closure of $k$ in $R$, then $R=\tilde{k}^{[1]}$ and $R^{\phi}= \tilde{k}$.
		\item  For any multiplicative subset $S$ of $R^{\phi}\backslash\{0\}$, $\phi$ extends to a nontrivial exponential map $S^{-1}\phi$ on $S^{-1}R$ by setting $(S^{-1}\phi)(a/s):= \phi(a)/s$ for $a\in R, s\in S$, and the invariants ring of $S^{-1}\phi$ is $S^{-1}(R^{\phi})$.
		
	\end{enumerate}
\end{lemma}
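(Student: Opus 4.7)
The whole lemma flows from one basic structural observation: because $R$ is a domain, so is $R[W]$, and any nontrivial exponential map $\phi=\phi_{W}$ gives, for each $r\in R$, an expansion
\[
\phi_{W}(r)=r+a_{1}(r)W+a_{2}(r)W^{2}+\cdots\in R[W],
\]
with finitely many nonzero terms. The axiom $\epsilon_{0}\phi_{W}=\mathrm{id}_{R}$ pins down the constant term, and the cocycle identity $\phi_{T}\phi_{W}=\phi_{W+T}$ is the single tool that governs how higher-degree coefficients behave. The plan is to exploit degree comparisons in $W$ (and in $T,W$) throughout.

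For (\ref{factorialproperty}) and (\ref{algebraicclosedproperty}), I would just compare $W$-degrees. From $\phi_{W}(f)\phi_{W}(g)=\phi_{W}(fg)=fg$ in the domain $R[W]$, both factors must have $W$-degree $0$, giving $f,g\in R^{\phi}$; that proves factorial closure. For algebraic closure, if $a\in R$ satisfies a monic polynomial equation over $R^{\phi}$, applying $\phi_{W}$ yields a monic equation for $\phi_{W}(a)$ over $R^{\phi}[W]=(R^{\phi})[W]$ with the same coefficients, which by unique factorization forces $\deg_{W}\phi_{W}(a)=0$.

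The main technical step is (\ref{localsliceproperty}), the local slice lemma, and that is where I expect the real work to be. Let $x\in R\setminus R^{\phi}$ have minimal positive $W$-degree $n$ in $\phi_{W}(x)$, with leading coefficient $c\in R$. Apply the cocycle relation to $x$ and compare the coefficients of the top monomials in the two variables $T,W$: the relation $\phi_{T}\phi_{W}(x)=\phi_{W+T}(x)$ forces $\phi_{T}(c)=c$, hence $c\in R^{\phi}$. To conclude $R[c^{-1}]=R^{\phi}[c^{-1}][x]$, I would argue by induction on $\deg_{W}\phi_{W}(r)$ for $r\in R$: given such an $r$ of degree $m\geq 1$ with leading coefficient $b$, the element $c^{m}r-bx^{m}$ has strictly smaller $W$-degree by construction (its top $W^{m}$ term in $\phi_{W}$ cancels), while $c^{m}r-bx^{m}\in R$. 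Repeating shows $c^{N}r\in R^{\phi}[x]$ for some $N$, which is exactly what is needed. Throughout, minimality of $n$ and the factorial closure from (\ref{factorialproperty}) keep the leading coefficients in $R^{\phi}$.

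Parts (\ref{invariantdimension})--(\ref{1dimensionpolynomialcrieterion}) and (6) are then downstream. For (\ref{invariantdimension}), note that $x$ is transcendental over $R^{\phi}$ (otherwise (\ref{algebraicclosedproperty}) would put $x$ into $R^{\phi}$), and (\ref{localsliceproperty}) gives $R[c^{-1}]=R^{\phi}[c^{-1}][x]$; inverting an element of $R^{\phi}$ does not change transcendence degree, so $\mathrm{trdeg}_{k}R=\mathrm{trdeg}_{k}R^{\phi}+1$. Part (\ref{1dimensionpolynomialcrieterion}) follows by specializing: $\mathrm{trdeg}_{k}R^{\phi}=0$ combined with (\ref{algebraicclosedproperty}) forces $R^{\phi}=\tilde{k}$, and $c\in\tilde{k}\setminus\{0\}$ is invertible, giving $R=\tilde{k}[x]=\tilde{k}^{[1]}$. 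Finally (6) is a routine check that the two exponential-map axioms extend verbatim after inverting a multiplicative set of invariants, since denominators are fixed by $\phi$. I expect the cocycle-plus-minimality argument in (\ref{localsliceproperty}) to be the only step requiring genuine care; the rest are bookkeeping.
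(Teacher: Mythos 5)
The paper does not prove this lemma at all; it is quoted verbatim from Gupta's 2014 paper on Zariski's cancellation problem, so there is no in-paper argument to compare against and your attempt has to stand on its own.

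Parts (1), (2), (4), (5), (6) of your sketch are essentially sound. Two small remarks: in (2) the phrase ``by unique factorization'' should be dropped — $R[W]$ need not be a UFD, and the comparison of leading $W$-coefficients in a domain already gives the contradiction; in (5) you should observe explicitly that $\tilde k$, being the algebraic closure of a field in a domain, is itself a field, so $c\in\tilde k\setminus\{0\}$ is already a unit in $R$ and no localization is needed.

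The genuine gap is in your reduction step for (3). You take $r$ with $m:=\deg_W\phi_W(r)\geq 1$, leading coefficient $b$, and claim $c^m r-bx^m$ has strictly smaller $W$-degree because the top $W^m$-terms cancel. But $\phi_W(bx^m)=b\,\phi_W(x)^m$ (once one knows $b\in R^\phi$) has $W$-degree $nm$, where $n=\deg_W\phi_W(x)$; this matches $\deg_W\phi_W(c^m r)=m$ only when $n=1$. For $n>1$ your candidate element has \emph{larger} $W$-degree, namely $nm$, and the descent fails. Over a field of characteristic zero one always has $n=1$ (a local slice of an LND), so there the argument as written is fine — but the lemma is stated, and used in Section 3 of the paper, over an arbitrary field, where $n$ may be a positive power of $p$. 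The correct reduction is $c^{m/n}r-b\,x^{m/n}$, and this presupposes the nonobvious fact that the minimal positive $W$-degree $n$ divides $\deg_W\phi_W(r)$ for every $r\in R$. That divisibility statement (usually proved by passing to $S^{-1}R=K^{[1]}$ with $S=R^\phi\setminus\{0\}$ and using that $\phi_W$ there is translation by an additive polynomial of degree $p^k$) is the missing idea, and it is the real content of (3) in positive characteristic. A secondary inaccuracy: the fact that the leading coefficient $b$ lies in $R^\phi$ follows from the same $\phi_T\phi_W=\phi_{W+T}$ comparison you used for $c$, not from the factorial closure of (1) as your closing sentence suggests.
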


Below we recall some notions and facts about filtration and associated graded ring.

\begin{definition}\label{def: grading} 
Let $R$ be a ring. A $\mathbb{Z}$-grading of $R$ is a family of subgroups $\{R_{n}\}_{n\in \mathbb{Z}}$ of $(R, +)$ such that:
	\begin{enumerate}
		\item $R= \oplus_{n\in \mathbb{Z}}R_{n}$.
		\item $R_{n}R_{m}\subseteq R_{n+m}$ for all $n, m \in \mathbb{Z}$.
	\end{enumerate}
\end{definition}

\begin{definition}
	Let $R$ be an affine domain over a field $k$. A family of $k$-linear subspaces $\{R_{n}\}_{n\in \mathbb{Z}}$ of $R$ is said to be a $\mathbb{Z}$-filtration if
	\begin{enumerate}
		\item[1.] $R_{n}\subseteq R_{n+1}$ for all $n\in \mathbb{Z}$;
		\item[2.] $R=\bigcup_{n\in \mathbb{Z}}R_{n}$,
	\end{enumerate}
and it is called a proper $\mathbb{Z}$-filtration if it has the following further properties
	\begin{enumerate}
			\item[3.] $\bigcap_{n\in \mathbb{Z}}R_{n}=\{0\}$;
		\item[4.] $(R_{n}\backslash R_{n-1})(R_{m}\backslash R_{m-1})\subseteq R_{n+m}\backslash R_{n+m-1}$ for all $n, m\in \mathbb{Z}$.
	\end{enumerate}	
\end{definition}	

For an affine domain $R$ over a field $k$, a proper $\mathbb{Z}$-filtration $\{R_{n}\}_{n\in \mathbb{Z}}$ of $R$ is called \emph{admissible} if there exists a finite generating set $\Lambda$ of $R$ such that, for each $n$ and each element $r\in R_{n}$, we can write $r$ as a finite sum of monomials of $\Lambda$ and all these monomials are in $R_{n}$.

It is well-known that a proper $\mathbb{Z}$-filtration on $R$ induces an \emph{associated $\mathbb{Z}$-graded domain}
\[\textup{Gr}(R):= \oplus_{i}R_{i}/R_{i-1},\]
and an associated map \[\rho: R\rightarrow \textup{Gr}(R)\ \text{where}\ \rho(a)=a+R_{n-1} \ \text{for any}\ a\in R_{n}\backslash R_{n-1}.\]

An exponential map $\phi$ on a graded ring $R$ is called \emph{homogeneous} if $\phi_W:R\rightarrow R[W]$ is homogeneous when the grading of $R[W]$ is induced by $R$ and by setting $W$ homogeneous; whence $R^{\phi}$ is a graded subring of $R$.

The following result is on the relation of between the invariants ring of $R$ and that of $\textup{Gr}(R)$.
\begin{theorem}\label{thm: invariantofhomogeneneousexp}\cite[Proposition 2.2]{derksen2001newton}
	Let $R$ be an affine domain over a field $k$ with an admissible proper $\mathbb{Z}$-filtration and $\textup{Gr}(R)$ the associated $\mathbb{Z}$-graded domain. Let $\phi$ be a nontrivial exponential map on $R$. Then $\phi$ induces a nontrivial homogeneous exponential map $\overline{\phi}$ on $\textup{Gr}(R)$ such that $\rho(R^{\phi})\subseteq \textup{Gr}(R)^{\overline{\phi}}$.
\end{theorem}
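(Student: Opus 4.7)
The plan is to construct $\overline{\phi}$ explicitly as the ``top-degree part'' of $\phi_W$. Let $d:R\setminus\{0\}\to\mathbb{Z}$ denote the degree function of the filtration, $d(a)=n$ iff $a\in R_n\setminus R_{n-1}$; property 4 of the proper filtration makes $d$ multiplicative. For each $a\in R$, write $\phi_W(a)=\sum_{i\ge 0}a_i W^i$ with $a_0=a$. The approach is to assign a (rational) weight $\omega$ to the indeterminate $W$, extend the filtration to $R[W]$ by declaring $\deg(aW^i)=d(a)+i\omega$, and then define $\overline{\phi}_W$ by reading off the top-degree component of $\phi_W$.

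First I would fix the weight $\omega$. Let $\Lambda=\{\lambda_1,\dots,\lambda_m\}$ be the admissible generating set and write $\phi_W(\lambda_j)=\sum_i \lambda_{j,i}W^i$. Set
\[
\omega\ :=\ \min_{\substack{1\le j\le m\\ i\ge 1,\ \lambda_{j,i}\ne 0}}\frac{d(\lambda_j)-d(\lambda_{j,i})}{i}\ \in\ \mathbb{Q};
\]
after rescaling the filtration by a common denominator one may take $\omega\in\mathbb{Z}$. By construction $d(\lambda_{j,i})+i\omega\le d(\lambda_j)$ holds for every generator, with equality attained at some pair $(j_0,i_0)$ with $i_0\ge 1$. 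Using admissibility (every $a\in R_n$ is a finite sum of $\Lambda$-monomials all lying in $R_n$), multiplicativity of $d$, and the ring-homomorphism property of $\phi_W$, I would then extend this bound to arbitrary $a\in R$: that is, $d(a_i)+i\omega\le d(a)$ for every $i\ge 0$.

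Next, define $\overline{\phi}_W:\textup{Gr}(R)\to\textup{Gr}(R)[W]$ by grading $W$ homogeneous of degree $\omega$ and setting, for a nonzero homogeneous $\rho(a)\in R_n/R_{n-1}$,
\[
\overline{\phi}_W(\rho(a))\ :=\ \sum_{i:\ d(a_i)+i\omega=n}\rho(a_i)\,W^i.
\]
Independence of the representative is immediate, and additivity and multiplicativity reduce to the statement that the top-degree parts of $\phi_W(a)+\phi_W(b)$ and $\phi_W(a)\phi_W(b)$ agree with the graded sum and product of the respective top parts whenever no cancellation occurs at the top, which is precisely guaranteed by admissibility applied to each $W$-coefficient. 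The two exponential-map axioms lift: $\epsilon_0\overline{\phi}_W=\textup{id}$ because $a_0=a$ supplies $\rho(a)$ as the $W^0$ coefficient, while $\overline{\phi}_T\overline{\phi}_W=\overline{\phi}_{T+W}$ follows by extracting top-degree terms from the corresponding identity for $\phi$, with both $T$ and $W$ weighted by $\omega$. Homogeneity is by construction. Nontriviality comes from the extremal choice of $\omega$: at $(j_0,i_0)$ the nonzero term $\rho(\lambda_{j_0,i_0})W^{i_0}$ appears in $\overline{\phi}_W(\rho(\lambda_{j_0}))$, so $\rho(\lambda_{j_0})\notin\textup{Gr}(R)^{\overline{\phi}}$.

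Finally, the containment $\rho(R^\phi)\subseteq\textup{Gr}(R)^{\overline{\phi}}$ is transparent: for $a\in R^\phi$ one has $\phi_W(a)=a$, so $a_i=0$ for all $i\ge 1$, whence $\overline{\phi}_W(\rho(a))=\rho(a)$. The main obstacle will be the propagation of the inequality $d(a_i)+i\omega\le d(a)$ from generators to all of $R$, together with the verification that extracting the top-degree part genuinely respects the ring structure; both steps depend critically on the admissibility hypothesis, which rules out hidden top-degree cancellation in ad hoc decompositions of elements.
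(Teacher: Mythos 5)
The paper does not reproduce a proof of this statement: it is simply quoted from Derksen--Hadas--Makar-Limanov \cite{derksen2001newton}, Proposition~2.2. Your reconstruction is correct and is essentially the argument of that reference: assign $W$ the weight $\omega=\min_{j,i\ge 1}(d(\lambda_j)-d(\lambda_{j,i}))/i$ over the admissible generating set, propagate the bound $d(a_i)+i\omega\le d(a)$ to all of $R$ via admissibility and multiplicativity of $d$, and define $\overline{\phi}_W$ by top-degree extraction, with nontriviality read off from the extremal pair $(j_0,i_0)$. One minor imprecision worth noting: the multiplicativity of $\overline{\phi}_W$ is secured by property 4 of the \emph{proper} filtration (leading classes of products never cancel, so $\rho(a_jb_l)=\rho(a_j)\rho(b_l)$ at top degree), not by admissibility; admissibility is what you need to pass the degree bound from the generators to arbitrary elements.
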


\section{ML-invariant of double Danielewski varieties}

In this section, we determine the ML-invariant of double Danielewski varieties over any field $k$.

The coordinate ring of a double Danielewski variety is of the form
\begin{equation}\label{eq: A}
	A(L_{[d],[e]})=\frac{k[\underline{X},Y,Z,T]}{(\underline{X}^{[d]}Y - P(\underline{X},Z), \underline{X}^{[e]}T - Q(\underline{X},Y,Z))},
\end{equation}
where $\underline{X}:=(X_1,\ldots,X_n)$, $Y, Z, T$ are independent variables, $n\geq 1$, $[d]=(d_1,\ldots,d_n)\in \mathbb{Z}_{\geq 1}^{n}$, $[e]=(e_1,\ldots,e_n)\in \mathbb{Z}_{\geq 1}^{n}$, $\underline{X}^{[d]}=X_1^{d_1}\cdots X_{n}^{d_{n}}$, $\underline{X}^{[e]}=X_1^{e_1}\cdots X_{n}^{e_{n}}$, $P(\underline{X},Z)\in k[\underline{X},Z]$ is monic in $Z$ and $Q(\underline{X}, Y, Z)\in k[\underline{X}, Y, Z]$ is monic in $Y$ with $r:=\deg_{Z}P(\underline{X}, Z)\geq 2$ and $s:=\deg_{Y}Q(\underline{X}, Y, Z)\geq 2$.

In what follows, we write $\underline{x}, y, z$ and $t$ for the images of $\underline{X}, Y, Z, T $ in $A(L_{[d],[e]})$ respectively.

We first give some lemmas which will be used to verify the integrity of $A(L_{[d],[e]})$.
\begin{lemma}\label{lem:zero-divisor}\cite[Lemma 3.1]{gupta2019double}
	Let $R$ be a  domain and $a, b \in R\backslash\{0\}$. If $a$ is not a zero-divisor on $R/(b)$, then $b^n$ is not a zero-divisor on $R/(a)$ for any integer $n\geq 1$.
\end{lemma}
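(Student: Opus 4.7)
The plan is to reduce to the case $n=1$ and then iterate by induction on $n$. First, I would unpack what the hypothesis and conclusion say in element-theoretic terms: the hypothesis ``$a$ is not a zero-divisor on $R/(b)$'' means that for every $u\in R$, the relation $au\in bR$ forces $u\in bR$; the desired conclusion is that for every $y\in R$, the relation $b^{n}y\in aR$ forces $y\in aR$.

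For the base case $n=1$, I begin from an equation $by=ac$ with $y,c\in R$. Reading it as $ac\in bR$ and applying the hypothesis on $a$, I obtain $c\in bR$, so I can write $c=bc'$ for some $c'\in R$. Substituting back gives $by=abc'$, and since $R$ is a domain with $b\neq 0$ I cancel $b$ on the left to conclude $y=ac'\in aR$, as desired.

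For the inductive step with $n\geq 2$, I would rewrite an equation $b^{n}y=ac$ as $b(b^{n-1}y)=ac$, apply the $n=1$ case (with $b^{n-1}y$ in the role of $y$) to conclude $b^{n-1}y\in aR$, and then invoke the inductive hypothesis on $n-1$ to conclude $y\in aR$.

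The argument is short and routine, so there is no real obstacle. The only point worth flagging is that the integral domain hypothesis on $R$, and not merely the non-zero-divisor hypothesis on $a$ modulo $b$, is genuinely used, precisely at the step where $b$ is cancelled on the left inside the base case; without that, one could only conclude $b(y-ac')=0$ and not $y=ac'$.
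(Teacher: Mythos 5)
Your argument is correct, and it is the standard one; the paper itself does not reprove this lemma but cites it from Gupta--Sen, so there is no in-paper proof to compare against. One small simplification worth noting: once the base case $n=1$ establishes that $b$ is not a zero-divisor on $R/(a)$, the statement for all $n\geq 1$ follows immediately because non-zero-divisors in any ring form a multiplicatively closed set, so $b^n$ is automatically a non-zero-divisor on $R/(a)$; your inductive step is a correct but slightly longer route to the same conclusion. Your remark about where the domain hypothesis enters (cancelling $b$ after obtaining $b(y-ac')=0$) is exactly the right thing to flag, since that is the only place the argument uses more than the stated non-zero-divisor condition.
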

\begin{lemma}\label{lem: gen zero-divisor}
	Let $R$ be a domain and $a, b_{1},\ldots, b_{n}\in R\backslash\{0\}$. Suppose that $a$ is not a zero divisor on $R/(b_{1},\ldots,b_{n})$, and let $\underline{b}=(b_1,\ldots,b_n)$. Then $\underline{b}^{[m]}$ is not a zero-divisor on $R/(a)$ for any $[m]=(m_1,\ldots,m_n)\in \mathbb{Z}_{\geq 1}^{n}$,
\end{lemma}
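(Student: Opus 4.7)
The natural approach is to reduce the multi-index statement to the single-index case already covered by Lemma~\ref{lem:zero-divisor}, exploiting the elementary fact that, in any commutative ring, a finite product of non-zero-divisors on a module is again a non-zero-divisor on that module. Thus, if we can show each factor $b_i^{m_i}$ is a non-zero-divisor on $R/(a)$, then the product $\underline{b}^{[m]}=b_1^{m_1}\cdots b_n^{m_n}$ will also be a non-zero-divisor on $R/(a)$, completing the proof.

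For each $i\in\{1,\ldots,n\}$, Lemma~\ref{lem:zero-divisor} applied to the pair $(a,b_i)$ yields that $b_i^{m_i}$ is a non-zero-divisor on $R/(a)$ for every $m_i\geq 1$, provided that the single-element hypothesis holds, namely that $a$ is a non-zero-divisor on $R/(b_i)$. The initial step of the proof, therefore, is to extract this individual hypothesis for each $i$ from the joint hypothesis that $a$ is a non-zero-divisor on $R/(b_1,\ldots,b_n)$. Once this is in hand, invoking Lemma~\ref{lem:zero-divisor} separately for each $i$ and then combining the conclusions via the multiplicative closure of non-zero-divisors finishes the argument.

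The main obstacle is precisely this descent from the joint quotient $R/(b_1,\ldots,b_n)$ to the individual quotients $R/(b_i)$: all other steps are formal. One alternative organization would be an induction on $n$, with base case $n=1$ supplied directly by Lemma~\ref{lem:zero-divisor} and the inductive step peeling off a single factor $b_i^{m_i}$ at a time; but the same reduction to a single-variable non-zero-divisor hypothesis remains the crux, so this reorganization does not circumvent the main difficulty.
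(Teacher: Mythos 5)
Your outline correctly isolates the crucial step -- passing from ``$a$ is a non-zero-divisor on $R/(b_1,\ldots,b_n)$'' to ``$a$ is a non-zero-divisor on each $R/(b_i)$'' -- but this descent is false in general, so the argument cannot be completed as planned; in fact the lemma as literally stated appears to be false. Take $R = k[x,y]$, $a = x$, $b_1 = xy$, $b_2 = y$, and $[m]=(1,1)$. Then $(b_1,b_2)=(y)$, so $R/(b_1,b_2)\cong k[x]$ and $a=x$ is a non-zero-divisor there; yet $\underline{b}^{[m]} = xy^2$ lies in $(a)$, hence acts as $0$ on the nonzero module $R/(a)\cong k[y]$ and so is a zero-divisor on it. The culprit is exactly the missing descent: here $a=x$ \emph{is} a zero-divisor on $R/(b_1) = k[x,y]/(xy)$, so Lemma~\ref{lem:zero-divisor} applied to the pair $(a,b_1)$ has a false hypothesis and yields nothing. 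The paper's own one-line proof elides the same point: it derives a contradiction with Lemma~\ref{lem:zero-divisor} from ``some $b_i$ is a zero-divisor on $R/(a)$'' without first securing that lemma's hypothesis for that $b_i$.

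The repair is to strengthen the hypothesis to ``$a$ is a non-zero-divisor on $R/(b_i)$ for every $i$.'' Under that hypothesis your own plan closes immediately: Lemma~\ref{lem:zero-divisor} gives that each $b_i^{m_i}$ is a non-zero-divisor on $R/(a)$, and a finite product of non-zero-divisors on a fixed module is again a non-zero-divisor on it. This strengthened hypothesis is in fact what is available in the paper's sole use of the lemma, inside Lemma~\ref{lem: domain}: there $b_i = x_i$ and $R/(x_i)$ is a polynomial ring in $Y$ over the nonzero ring $k[X_1,\ldots,\hat{X}_i,\ldots,X_n,Z]/(P(X_1,\ldots,0,\ldots,X_n,Z))$, so the monic-in-$Y$ element $a=Q(\underline{x},y,z)$ is automatically a non-zero-divisor on each $R/(x_i)$; hence the downstream results are unaffected once the lemma's hypothesis is corrected.
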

\begin{proof}
Suppose, by contrary, that $\underline{b}^{[m]}$ is a zero-divisor on $R/(a)$ for some $[m]\in \mathbb{Z}_{\ge 1}^{n}$. Then there exists some $i$ such that $b_{i}$ is a zero-divisor of $R/(a)$, which contradicts Lemma \ref{lem:zero-divisor}.
\end{proof}

\begin{lemma}\label{lem: integrity}\cite[Lemma 2.4]{dutta2012some}
	Let $R$ be a domain and $a, b\in R\backslash\{0\}$. If $b$ is not a zero-divisor on $R/(a)$, then the ring $\frac{R[T]}{bT-a}$ is a  domain.
\end{lemma}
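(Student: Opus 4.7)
The plan is to realize $S := R[T]/(bT-a)$ as a subring of a domain. Consider the $R$-algebra homomorphism $\phi : R[T] \to R_b$ sending $T \mapsto a/b$, where $R_b$ denotes the localization of $R$ at the multiplicative set $\{1, b, b^2, \ldots\}$. Since $R$ is a domain and $b \neq 0$, $R_b$ is a domain. Clearly $bT - a \in \ker\phi$, so $\phi$ factors as $R[T] \twoheadrightarrow S \to R[a/b] \hookrightarrow R_b$. It therefore suffices to prove that $\ker\phi = (bT - a)$, since then $S \cong R[a/b] \subseteq R_b$ is automatically a domain.

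To establish the nontrivial inclusion $\ker\phi \subseteq (bT-a)$, take $f(T) = \sum_{i=0}^{n} f_i T^i \in \ker\phi$ and argue by induction on $n = \deg f$. Evaluating $\phi(f) = 0$ in $R_b$ and clearing denominators (using that $R \hookrightarrow R_b$) yields the relation
\[ \sum_{i=0}^{n} f_i\, a^i\, b^{n-i} = 0 \ \text{ in } R. \]
The case $n = 0$ just says $f_0 = 0$. For the inductive step, reduce the relation modulo $a$ to obtain $f_0 b^n \equiv 0 \pmod{a}$; since $b$ is a non-zero-divisor on $R/(a)$ by hypothesis, the same holds for $b^n$ (an easy iteration in the spirit of Lemma \ref{lem:zero-divisor}), and hence $f_0 = a h_0$ for some $h_0 \in R$. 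A direct computation then shows that
\[ f(T) + h_0(bT - a) = T \cdot \tilde{f}(T), \]
where $\tilde{f}(T) := (f_1 + h_0 b) + f_2 T + \cdots + f_n T^{n-1}$ has degree at most $n-1$, and substituting the relation $f_0 = a h_0$ back into the original vanishing expression and cancelling the common factor $a$ yields precisely the identity $(f_1 + h_0 b) b^{n-1} + f_2 a b^{n-2} + \cdots + f_n a^{n-1} = 0$ that says $\tilde{f} \in \ker\phi$. By the inductive hypothesis $\tilde{f} \in (bT - a)$, so $f(T) = T \tilde{f}(T) - h_0(bT - a) \in (bT - a)$.

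The main obstacle is the inductive step, and within it the passage from $f_0 b^n \equiv 0 \pmod{a}$ to $f_0 \in (a)$; this is precisely where the hypothesis that $b$ is a non-zero-divisor on $R/(a)$ is consumed. Everything else is routine bookkeeping with polynomial coefficients, and the localization trick collapses the entire problem to an embedding into a domain.
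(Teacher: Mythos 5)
Your proof is correct. The paper itself does not prove this lemma (it cites Dutta--Gupta--Onoda, Lemma~2.4), so there is no internal argument to compare against, but the localization trick you use is the natural one: embed $S=R[T]/(bT-a)$ into the domain $R_b$ via $T\mapsto a/b$ and check that the kernel of $R[T]\to R_b$ is exactly $(bT-a)$. Your degree induction is sound: the passage $f_0 b^n\equiv 0\pmod a\Rightarrow f_0\in (a)$ is precisely where the hypothesis on $b$ is used, and the bookkeeping showing $\tilde f\in\ker\phi$ checks out. One small streamlining you could have made: once you know $f+h_0(bT-a)=T\tilde f$, the fact that $\tilde f\in\ker\phi$ follows immediately from $\phi(T)\phi(\tilde f)=\phi(T\tilde f)=0$ and $\phi(T)=a/b\neq 0$ in the domain $R_b$, with no need to recompute the coefficient relation. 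A still shorter route to the whole lemma, which you may find instructive, is to prove directly that $b$ is a non-zero-divisor on $S$: if $bf=g(bT-a)$ in $R[T]$, reduce mod $a$ to get $\bar b\bar f=\bar b\bar g T$ in $(R/(a))[T]$, cancel $\bar b$ (hypothesis) to get $f=gT+ah$, then substitute back to find $g=-bh$ and hence $f=-h(bT-a)$; with $b$ a non-zero-divisor on $S$ one gets $S\hookrightarrow S_b\cong R_b$ at once. Both arguments consume the hypothesis at the analogous point, so this is a matter of taste rather than substance.
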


\begin{lemma} \label{lem: domain}
	$A(L_{[d],[e]})$ is a  domain.
\end{lemma}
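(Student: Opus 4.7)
The strategy is to build $A(L_{[d],[e]})$ in two stages, adjoining one defining relation at a time, using Lemma \ref{lem: integrity} to preserve integrity at each step. The non-zero-divisor hypotheses of Lemma \ref{lem: integrity} will be verified via Lemma \ref{lem: gen zero-divisor}, which reduces a zero-divisor check for the monomial $\underline{X}^{[d]}$ (resp.\ $\underline{X}^{[e]}$) to a zero-divisor check modulo the ideal $(X_1,\ldots,X_n)$; in that quotient the monicity hypotheses on $P$ and $Q$ apply directly.

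For the first stage, let $R_1:=k[\underline{X},Z]$ and $R_2:=R_1[Y]/(\underline{X}^{[d]}Y-P(\underline{X},Z))$. By Lemma \ref{lem: integrity} applied with $a=P(\underline{X},Z)$ and $b=\underline{X}^{[d]}$, the ring $R_2$ is a domain provided $\underline{X}^{[d]}$ is not a zero-divisor in $R_1/(P(\underline{X},Z))$. By Lemma \ref{lem: gen zero-divisor} this follows from $P(\underline{X},Z)$ being a non-zero-divisor in $R_1/(X_1,\ldots,X_n)\cong k[Z]$, which holds because the image $P(0,Z)$ is monic in $Z$ of degree $r\geq 2$, hence a nonzero element of the domain $k[Z]$.

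For the second stage, note that $A(L_{[d],[e]})\cong R_2[T]/(\underline{x}^{[e]}T-Q(\underline{x},y,z))$, where $\underline{x},y,z$ are the images of $\underline{X},Y,Z$ in $R_2$. By Lemma \ref{lem: integrity} applied in $R_2$ with $a=Q$ and $b=\underline{x}^{[e]}$, it suffices to check that $\underline{x}^{[e]}$ is not a zero-divisor in $R_2/(Q)$. By Lemma \ref{lem: gen zero-divisor} this reduces to showing that $Q$ is not a zero-divisor in $R_2/(x_1,\ldots,x_n)$. Since each $d_i\geq 1$, the term $\underline{X}^{[d]}Y$ vanishes modulo $(X_1,\ldots,X_n)$, so a direct computation identifies $R_2/(x_1,\ldots,x_n)$ with $k[Y,Z]/(P(0,Z))\cong (k[Z]/(P(0,Z)))[Y]$, a polynomial ring in $Y$ over a commutative ring. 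The image of $Q$ there is monic in $Y$ of degree $s$, and a monic polynomial over any commutative ring is a non-zero-divisor, finishing the proof.

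The only mildly subtle point is the identification of $R_2/(x_1,\ldots,x_n)$ and the observation that one needs only monicity of $Q$ in $Y$ (not integrality of the coefficient ring $k[Z]/(P(0,Z))$, which may well have zero-divisors when $P(0,Z)$ factors over $k$) to conclude that $Q$ is regular there; the remainder of the argument is a routine two-step assembly of the lemmas already established.
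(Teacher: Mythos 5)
Your proof is correct and follows essentially the same two-stage strategy as the paper, adjoining $Y$ and then $T$ via Lemma \ref{lem: integrity} with the zero-divisor hypothesis checked through Lemma \ref{lem: gen zero-divisor}. The only difference is in the first stage: the paper observes directly that $\underline{X}^{[d]}Y - P(\underline{X},Z)$ is irreducible in $k[\underline{X},Y,Z]$ (being linear in $Y$ with coprime coefficients), whereas you apply the same pair of patching lemmas uniformly to both stages, which is equally valid and arguably a bit cleaner.
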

\begin{proof}
	Let $R=\frac{k[\underline{X}, Y, Z]}{(\underline{X}^{[d]}Y- P(\underline{X},Z))} $. Since $\underline{X}^{[d]}Y-P(\underline{X},Z)$ is linear in $Y$,  and $\underline{X}^{(1,\ldots,1)}$ and $P(\underline{X}, Z)$ ($P$ is monic in $Z$) are coprime in $k[\underline{X}, Z]$, we have $\underline{X}^{[d]}Y-P(\underline{X},Z)$ is irreducible $k[\underline{X},Y,Z]$. Hence, $R$ is a  domain. We view $R$ as a subring of $A(L_{[d],[e]})$ by identifying the images of $\underline{X}, Y, Z$ in $R$ with $\underline{x}, y, z$ in $A(L_{[d],[e]})$, whence $L=R[T]/(\underline{x}^{[e]}T- Q(\underline{x},y,z))$.

	Now $R/(\underline{x})\cong \frac{k[Z]}{P(\underline{0},Z)}[Y]$. Since $Q(\underline{X}, Y, Z)$ is monic in $Y$, it follows that $Q(\underline{x},y,z)$ is not a zero-divisor on $R/(\underline{x})$. By Lemma \ref{lem: gen zero-divisor}, $\underline{x}^{[e]}$ is not a zero-divisor on $R/(Q(\underline{x}, y, z))$. Therefore, $L$ is a domain due to Lemma \ref{lem: integrity}.
\end{proof}
	
\begin{lemma}\label{lem: rigiditycrieterion}\cite[Lemma 3.3]{2013The}
	An affine domain $A$ of transcendence degree one over a field $k$ is rigid if $\mathrm{Spec}(A)$ has a singular point.
\end{lemma}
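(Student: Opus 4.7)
The plan is to argue by contradiction. Suppose $A$ admits some nontrivial exponential map $\phi$; I will then derive that every point of $\mathrm{Spec}(A)$ is regular, contradicting the hypothesis. Since $\mathrm{trdeg}_k(A) = 1$, Lemma \ref{lem:expproperty}(\ref{1dimensionpolynomialcrieterion}) applies verbatim and yields $A = \tilde{k}^{[1]}$, where $\tilde{k}$ denotes the algebraic closure of $k$ inside $A$. In other words, the mere existence of a nontrivial exponential map on a transcendence-degree-one affine domain forces it to be a polynomial ring in one variable over the field $\tilde{k}$.

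With this identification in hand, the second step is to observe that $\tilde{k}[W]$ is a principal ideal domain, so its localization at any maximal ideal is a DVR (a regular local ring of dimension one) and its localization at the zero ideal is the field of fractions (regular of dimension zero). Consequently $\mathrm{Spec}(A) \cong \mathbb{A}^{1}_{\tilde{k}}$ is regular at every point, which directly contradicts the assumption that $\mathrm{Spec}(A)$ has a singular point. Hence no nontrivial exponential map can exist, i.e., $\mathrm{ML}(A) = A$.

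I do not expect any genuine obstacle here: Lemma \ref{lem:expproperty}(\ref{1dimensionpolynomialcrieterion}) does essentially all the work, and the conclusion then reduces to the elementary fact that an affine line over a field is smooth. The only minor point worth noting is that regularity of $\mathrm{Spec}(A)$ is an intrinsic property of the local rings of $A$, so there is no need to worry about separability issues of the extension $\tilde{k}/k$ when passing from the statement about $A$ over $k$ to the concrete model $\tilde{k}[W]$.
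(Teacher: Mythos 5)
The paper does not supply its own proof of this lemma; it is cited from Alhajjar \cite[Lemma 3.3]{2013The}. That said, your argument is correct and is the standard one: Lemma~\ref{lem:expproperty}(\ref{1dimensionpolynomialcrieterion}) forces a transcendence-degree-one affine domain admitting a nontrivial exponential map to be $\tilde k^{[1]}$, a polynomial ring over a field, and every localization of a polynomial ring over a field at a prime is a regular local ring, so $\mathrm{Spec}(A)$ would be regular everywhere, contradicting the assumed singular point. The one place where care is genuinely required is the one you flag: if ``singular'' were read as ``not smooth over $k$'' rather than ``local ring not regular,'' the argument could break when $\tilde k/k$ is inseparable, since $\mathrm{Spec}(\tilde k[W])$ need not be $k$-smooth then. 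You handle this correctly by adopting the intrinsic (regularity) reading, which is the one the cited lemma uses and which makes the application in the proof of Lemma~\ref{lem: invariantinV} go through. In short, your proof is sound and matches the intended route; there is no competing argument in the paper to contrast it with.
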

In the next two lemmas, we will construct from $A(L_{[d],[e]})$ a sequence of graded rings such that each one is the associated graded ring of its predecessor, and finally we arrive at a special graded domain $V$.

\begin{lemma}\label{lem: reducetoU}

Let $A^{(0)}:=A(L_{[d],[e]})$. Then we can construct a sequence of domains $A^{(1)},A^{(2)},\ldots,A^{(n)}$ such that for each $0\leq j\leq n-1$ there exists an admissible proper filtration on $A^{(i)}$ such that $\textup{Gr}(A^{(j)})\cong A^{(j+1)}$, $0\leq j\leq n-1$ and
\begin{equation}\label{eq: equationofU}	
	A^{(n)}=\frac{k[\underline{X}, Y, Z, T]}{(\underline{X}^{[d]}Y- P(\underline{0},Z), \underline{X}^{[e]}T- Y^{s})}.
\end{equation}
\end{lemma}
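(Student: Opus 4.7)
My plan is to construct the sequence by a single-variable degeneration at each stage: step $j \to j+1$ kills the $X_{j+1}$-dependence of $P$ in the first defining relation, and the initial step ($j=0$) additionally degenerates $Q$ to its leading $Y$-monomial. Concretely, define for $0 \le j \le n$,
\[A^{(j)} := \frac{k[\underline{X}, Y, Z, T]}{\bigl(\underline{X}^{[d]} Y - P(\underbrace{0, \ldots, 0}_{j},\, X_{j+1}, \ldots, X_n, Z),\ \underline{X}^{[e]} T - Q^{(j)}\bigr)},\]
where $Q^{(0)} := Q$ and $Q^{(j)} := Y^s$ for $j \ge 1$; this recovers (\ref{eq: A}) at $j=0$ and (\ref{eq: equationofU}) at $j=n$. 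Each $A^{(j)}$ is a domain by the three-step argument of Lemma \ref{lem: domain}: the first relation is irreducible in $k[\underline{X}, Y, Z]$ because each $P$-restriction is monic in $Z$ and hence coprime to $\underline{X}^{[d]}$; then Lemmas \ref{lem: gen zero-divisor} and \ref{lem: integrity} apply to the second relation, since $Y^s$ (and $Q$ when $j=0$) is not a zero-divisor modulo $\underline{x}$ as both are monic in $Y$.

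For the filtration on $A^{(j)}$ degenerating to $A^{(j+1)}$ (where $0 \le j \le n-1$), I fix a large integer $a > 0$ and assign weights $\omega(X_{j+1}) = -a$, $\omega(X_i) = 0$ for $i \ne j+1$, $\omega(Z) = 0$, $\omega(Y) = d_{j+1} a$, and $\omega(T) = (s d_{j+1} + e_{j+1}) a$. This weight function on $k[\underline{X}, Y, Z, T]$ descends to a $\mathbb{Z}$-filtration on $A^{(j)}$ that is admissible with respect to the generating set $\Lambda := \{\underline{x}, y, z, t\}$. The top-weight components of the two defining relations of $A^{(j)}$ are
\[\underline{X}^{[d]} Y - P(\underbrace{0,\ldots,0}_{j+1}, X_{j+2}, \ldots, X_n, Z) \quad\text{and}\quad \underline{X}^{[e]} T - Y^s,\]
since every monomial of the residual $P$ containing $X_{j+1}$ has $\omega$-weight $\le -a$ (while $P|_{X_{j+1}=0}$ lives in weight $0$), and, only when $j=0$, every term of $Q$ of $Y$-degree $k < s$ has weight $\le k d_1 a < s d_1 a$. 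Sending the generators $\underline{X}, Y, Z, T$ of $A^{(j+1)}$ to the corresponding $\rho$-images in $\textup{Gr}(A^{(j)})$ thus defines a surjective graded $k$-algebra morphism $\sigma : A^{(j+1)} \twoheadrightarrow \textup{Gr}(A^{(j)})$.

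The main obstacle is to simultaneously establish condition 4 of properness for the filtration and injectivity of $\sigma$, which are entangled. My approach is a Krull-dimension count: localizing at $x_1 \cdots x_n$ identifies every $A^{(j)}$ with $k[x_1^{\pm 1}, \ldots, x_n^{\pm 1}, z]$, since $y$ and $t$ become expressible through the defining relations, so $\dim A^{(j)} = n+1$ for every $j$. The Rees algebra $\bigoplus_n R_n T^n \subseteq A^{(j)}[T, T^{-1}]$ associated to the filtration is a domain and $k[T]$-torsion-free, hence $k[T]$-flat, so its special fiber $\textup{Gr}(A^{(j)})$ and its general fiber $A^{(j)}$ share the same Krull dimension $n+1$. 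Since $A^{(j+1)}$ is a domain of dimension $n+1$ and $\sigma$ is surjective, any nonzero element of $\ker \sigma$ would force $\dim \textup{Gr}(A^{(j)}) \le n$ by Krull's principal ideal theorem; hence $\ker \sigma = 0$, giving $\textup{Gr}(A^{(j)}) \cong A^{(j+1)}$, and this retroactively confirms condition 4 via $\textup{Gr}(A^{(j)})$ being a domain.
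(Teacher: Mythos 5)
Your construction coincides with the paper's: the same intermediate rings $A^{(j)}$ (setting the $j$ earliest $X_i$ to zero in $P$ and replacing $Q$ by $Y^s$ after the first step), the same weight assignment up to the harmless scaling factor $a$ (the paper takes $a=1$), and the same observation that the initial forms of the two relations of $A^{(j)}$ are precisely the relations of $A^{(j+1)}$, producing a surjection $\sigma\colon A^{(j+1)}\twoheadrightarrow \textup{Gr}(A^{(j)})$. Where you genuinely diverge is in establishing injectivity of $\sigma$ and properness of the filtration. You run a Rees-algebra/flat-degeneration argument: $\mathcal{R}=\oplus_m A^{(j)}_m T^m$ is a $k[T]$-flat domain, so $\dim\textup{Gr}(A^{(j)})=\dim A^{(j)}=n+1$, forcing $\ker\sigma=0$. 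The paper instead defines the filtration by restricting the $x_{j+1}$-grading on the Laurent ring $k[\underline{x},\underline{x}^{-1},z]\supseteq A^{(j)}$; this makes properness (including separation) automatic and exhibits $\textup{Gr}(A^{(j)})$ directly as a subdomain of $k[\underline{x},\underline{x}^{-1},z]$ in which $\overline{\underline{x}},\overline{z}$ are algebraically independent, so the surjection $\sigma$ between domains of equal transcendence degree is an isomorphism with no degeneration machinery needed. The paper also records an explicit normal form for elements of $A^{(j)}$ (its equation (3.5)) to justify admissibility, which you assert without computation. Your route is valid but leaves two points to be made precise. First, separation (condition 3 of properness) is not automatic for a quotient filtration whose weights are unbounded below; it does hold here, e.g.\ because $A^{(j)}_{-N}\subseteq x_{j+1}^{N}A^{(j)}$ and Krull's intersection theorem applies, but this needs to be said since your argument only ``retroactively'' recovers condition 4. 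Second, the claim that the special and general fibers of $\textup{Spec}(\mathcal{R})\to\mathbb{A}^1$ have equal dimension requires $\mathcal{R}$ to be a finitely generated $k$-algebra and the morphism to be dominant with $\mathcal{R}$ irreducible; these hold, but the $\mathbb{Z}$-graded (rather than $\mathbb{N}$-graded) Rees algebra and the identification $\mathcal{R}/(T-c)\cong A^{(j)}$ for $c\neq 0$ deserve at least a sentence. In short: correct outcome, same degeneration, but a heavier and slightly under-justified mechanism for the isomorphism step compared with the paper's elementary transcendence-degree comparison inside the Laurent ring.
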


\begin{proof} For each $1\leq j\leq n$, define
\begin{equation}\label{eq: equationofL_{1}}
	 		A^{(j)}:=  \frac{k[\underline{X}, Y, Z, T]}{(\underline{X}^{[d]}Y- P(0,\ldots,0,X_{i+1},\ldots,X_{n}, Z), \underline{X}^{[e]}T- Y^{s})},
	\end{equation}
where  \begin{equation}
	 		A^{(n)}=  \frac{k[\underline{X}, Y, Z, T]}{(\underline{X}^{[d]}Y- P(0, Z), \underline{X}^{[e]}T- Y^{s})},
	\end{equation}
Note that $A^{(0)}:=A(L_{[d],[e]})$ can be seen as a subring of the $\mathbb{Z}$-graded ring
$k[\underline{x},\underline{x}^{-1},z]=\oplus_{i\in \mathbb{Z}}k[x_{2},x_{3},\ldots,x_{n},x_{2}^{-1},x_{3}^{-1},\ldots,x_{n}^{-1},z]x_{1}^{i}$. Define a proper $\mathbb{Z}$-filtration $\{A_m^{(0)}\mid m\in \mathbb{Z}\}$ on $A^{(0)}$, where \[A_m^{(0)}:=A^{(0)}\cap ( \oplus_{i \geq -m}k[x_{2},x_{3},\ldots,x_{n},x_{2}^{-1},x_{3}^{-1},\ldots,x_{n}^{-1},z]x_{1}^{i}).\]

Now we verify that $\textup{Gr}(A^{(0)})\cong A^{(1)}$. Note that \[x_{1} \in A^{(0)}_{-1}\backslash A^{(0)}_{-2},~ x_{2},\ldots, x_{n},z\in A_{0}^{(0)}\backslash A_{-1}^{(0)}, ~y\in A_{d_{1}}^{(0)}\backslash A_{d_{1}-1}^{(0)}\] and $t\in A_{(d_{1}s+e_{1})}^{(0)}\backslash A_{(d_{1}s+e_{1}-1)}^{(0)}$. By equalities $\underline{x}^{[d]}y= P(\underline{x},z)$ and $\underline{x}^{[e]}t=Q(\underline{x},y,z)$, we see that any element $q \in A^{(0)}$ can be written as
	\begin{align}\label{eq: expaninL}
		q= f_{0}(\underline{x},z)&+\sum_{\substack{ j> 0 \\ [\alpha]\in \mathbb{Z}_{\geq 0}^{n}, ~[\alpha]\ngeq [d]}}a_{j,[\alpha]}(z)\underline{x}^{[\alpha]}y^{j}\notag                                         +             \sum_{\substack{l>0 \\ [\beta]\in  \mathbb{Z}_{\geq 0}^{n},~ [\beta]\ngeq [e] }}b_{l,[\beta]}(z)\underline{x}^{[\beta]}t^{l}  \notag \\
		& + \sum_{\substack{j,l>0\\ [\gamma]\in \mathbb{Z}_{\geq 0}^{n},~[\gamma]\ngeq [d],~[\gamma]\ngeq [e]}}c_{j,l,[\gamma]}(z)\underline{x}^{[\gamma]}y^{j}t^{l},
	\end{align}
	where $f_{0}(x,z)\in k[\underline{x}, z],a_{j,[\alpha]}(z), b_{l,[\beta]}(z), c_{j,l,[\gamma]}(z)\in k[z]$. For $q\in A^{(0)}$, let $\tilde{q}$ denote the image of $q$ in $\textup{Gr}(A^{(0)})$. It follows from \eqref{eq: expaninL} that the filtration is admissible with the generating set $\{\underline{x}, y, z, t\}$. Hence, $\textup{Gr}(A^{(0)})$ is generated by $\tilde{\underline{x}},\tilde{y},\tilde{z},\tilde{t}$.
	
	Observing that, $\underline{x}^{[d]}y, P(0,x_{2},\ldots,x_{n},z) \in A_{0}^{(0)}$, $\underline{x}^{[d]}y-P(0,x_{2},\ldots,x_{n}, z)$
	$\in A_{-1}^{(0)}$, we have $\tilde{\underline{x}}^{[d]}\tilde{y}-P(0,\tilde{x_{2}},\ldots,\tilde{x_{n}},\tilde{z})= 0$ in $\textup{Gr}(A^{(0)})$. Again by $\underline{x}^{[e]}t, y^{s}\in A_{d_{1}s}^{(0)}$ and $\underline{x}^{[e]}t- y^s=Q(\underline{x}, y, z)- y^{s}\in A_{d_{1}s-1}^{(0)}$, we have $\tilde{\underline{x}}^{[e]}\tilde{t}-\tilde{y}^{s}= 0$ in $\textup{Gr}(A^{(0)})$. $\textup{Gr}(A^{(0)})$ can be seen as a subring of $\textup{Gr}(k[\underline{x},\underline{x}^{-1},z])\cong k[\underline{x},\underline{x}^{-1},z]$, and  the elements $\tilde{\underline{x}}$ and $\tilde{z}$ of $\textup{Gr}(A^{(0)})$ are algebraically independent over $k$. Since $A^{(0)}$ is a domain by Lemma \ref{lem: integrity}, we have $\textup{Gr}(A^{(0)})\cong A^{(1)}$.

By the same way, each $A^{(j)}$ can be seen as a subring of the $\mathbb{Z}$-graded ring
\[k[\underline{x},\underline{x}^{-1},z]=\oplus_{i\in \mathbb{Z}}k[x_1,\ldots,\hat{x}_{i},\ldots,x_{n},x_{1}^{-1},\ldots,\hat{x}_{i}^{-1},\ldots,x_{n}^{-1},z]x_{1}^{i},\]
 where $\hat{x}_i$ means omitting $x_i$. Define a proper $\mathbb{Z}$-filtration $\{A_m^{(j)}\mid m\in \mathbb{Z}\}$ on $A^{(j)}$, where \[A_m^{(j)}:=A^{(j)}\cap ( \oplus_{i \geq -m}k[x_1,\ldots,\hat{x}_{i},\ldots,x_{n},x_{1}^{-1},\ldots,\hat{x}_{i}^{-1},z]x_{1}^{i}).\]
By the same discussion as that of $\textup{Gr}(A^{(0)})\cong A^{(1)}$ above, one may verify that $\textup{Gr}(A^{(j)})\cong A^{(j+1)}$ for each $j$, which completes the proof.
\end{proof}

\begin{lemma}\label{lem: reducetoV}
	Write $U$ for  $A^{(n)}$ in \eqref{eq: equationofU} in Lemma \ref{lem: reducetoU}, i.e., \[U=\frac{k[\underline{X}, Y, Z, T]}{(\underline{X}^{[d]}Y- P(\underline{0},Z), \underline{X}^{[e]}T- Y^{s})}.\]
Then there exists an admissible proper filtration on $U$ such that the associated graded ring $\textup{Gr}(U)$ is isomorphic to \[V:= \frac{k[\underline{X}, Y, Z ,T]}{(\underline{X}^{[d]}Y - Z^{r}, \underline{X}^{[e]}T-Y^{s})}.\]
\end{lemma}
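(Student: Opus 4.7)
The plan is to mimic the proof of Lemma \ref{lem: reducetoU}, but with a filtration induced from the $z$-degree rather than an $x_i$-degree. Since $P(\underline{0}, Z)$ is monic in $Z$ of degree $r$, passing to the associated graded ring should strip off the lower-order terms in $z$ of $P(\underline{0}, z)$ and convert the relation $\underline{x}^{[d]} y = P(\underline{0}, z)$ into $\tilde{\underline{x}}^{[d]} \tilde{y} = \tilde{z}^r$, while the relation $\underline{x}^{[e]} t = y^s$ is already in the desired form.

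First I would embed $U$ as a subring of the Laurent polynomial ring $k[\underline{x}, \underline{x}^{-1}, z]$. By an argument identical to that of Lemma \ref{lem: domain} (replacing $P(\underline{X}, Z)$ by $P(\underline{0}, Z)$ and $Q(\underline{X}, Y, Z)$ by $Y^s$, with $Y^s$ not a zero-divisor in $k[Y, Z]/(P(\underline{0}, Z))$), the ring $U$ is a domain; localizing at $\underline{x}$ yields $y = P(\underline{0}, z)/\underline{x}^{[d]}$ and $t = P(\underline{0}, z)^s/\underline{x}^{s[d] + [e]}$, so $U[\underline{x}^{-1}] \cong k[\underline{x}, \underline{x}^{-1}, z]$. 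Grade the Laurent polynomial ring by $z$-degree, $k[\underline{x}, \underline{x}^{-1}, z] = \bigoplus_{m \geq 0} k[\underline{x}, \underline{x}^{-1}] z^m$, and pull this back to a $\mathbb{Z}$-filtration on $U$ via
\[
U_m := U \cap \Bigl( \bigoplus_{j \leq m} k[\underline{x}, \underline{x}^{-1}] z^j \Bigr).
\]
Under this filtration one has $x_i \in U_0 \setminus U_{-1}$ for each $i$, $z \in U_1 \setminus U_0$, $y \in U_r \setminus U_{r-1}$, and $t \in U_{rs} \setminus U_{rs-1}$.

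To establish admissibility I would exhibit a canonical normal form for elements of $U$ with respect to the generating set $\{\underline{x}, y, z, t\}$, in the spirit of \eqref{eq: expaninL}, but now using the simpler relations $\underline{x}^{[d]} y = P(\underline{0}, z)$ and $\underline{x}^{[e]} t = y^s$ to restrict the allowed monomials. With such a normal form in hand, each monomial appearing in an element of $U_m$ automatically lies in $U_m$, so the filtration is admissible. Computing in $\textup{Gr}(U)$: from $\underline{x}^{[d]} y - z^r = P(\underline{0}, z) - z^r \in U_{r-1}$ one obtains $\tilde{\underline{x}}^{[d]} \tilde{y} = \tilde{z}^r$, and from $\underline{x}^{[e]} t - y^s = 0$ one obtains $\tilde{\underline{x}}^{[e]} \tilde{t} = \tilde{y}^s$. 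These relations yield a surjective $k$-algebra homomorphism $V \twoheadrightarrow \textup{Gr}(U)$.

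Finally, to upgrade this surjection to an isomorphism, I would verify that $V$ is itself a domain by another application of the Lemma \ref{lem: domain} template (with $P(\underline{X}, Z) = Z^r$ and $Q(\underline{X}, Y, Z) = Y^s$, using that $Y^s$ is not a zero-divisor in $k[Y, Z]/(Z^r)$), and then match transcendence degrees: both $V$ and $\textup{Gr}(U)$ have transcendence degree $n + 1$ over $k$, since $V[\underline{X}^{-1}] \cong k[\underline{X}, \underline{X}^{-1}, Z]$ and $\textup{Gr}(U)$ embeds into $\textup{Gr}(k[\underline{x}, \underline{x}^{-1}, z]) \cong k[\underline{x}, \underline{x}^{-1}, z]$. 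A surjection between two domains of equal transcendence degree is an isomorphism. The main technical obstacle is the admissibility step, which demands a careful choice of canonical normal form in $U$; the rest of the argument is a direct adaptation of the pattern already established in Lemma \ref{lem: reducetoU}.
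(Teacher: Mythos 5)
Your proof is correct and follows essentially the same route as the paper: filter $U$ by $z$-degree via its embedding in $k[\underline{x},\underline{x}^{-1},z]$, verify admissibility through a normal form in the generators $\{\underline{x},y,z,t\}$, read off the relations $\overline{\underline{x}}^{[d]}\overline{y}=\overline{z}^r$ and $\overline{\underline{x}}^{[e]}\overline{t}=\overline{y}^s$ in $\textup{Gr}(U)$, and identify $\textup{Gr}(U)$ with $V$ via a surjection plus a transcendence-degree count. The only place where you stop short is the normal form itself, which the paper writes out explicitly as $\tilde{q}=\sum_{i=0}^{r-1}\bigl(\sum_{0\leq j<s}q_{ij}(\tilde{\underline{x}})\tilde{y}^{j}+\sum_{0\leq j<s,\,l>0}h_{ijl}(\tilde{\underline{x}})\tilde{y}^{j}\tilde{t}^{l}\bigr)\tilde{z}^{i}$; otherwise the approach is identical.
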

\begin{proof}
Let $\tilde{\underline{x}}, \tilde{y},\tilde{z},\tilde{t}$ denote the images of $\underline{X},Y , Z, T$ in $U$ respectively. See $U$  as a subring of the $\mathbb{N}$-graded ring $k[\tilde{\underline{x}},\tilde{\underline{x}}^{-1},\tilde{z}]= \oplus_{i\in \mathbb{N}}k[\tilde{\underline{x}}, \tilde{\underline{x}}^{-1}]\tilde{z}^{i}$, and define a proper $\mathbb{Z}$-filtration $\{U_{n}\}_{n\in \mathbb{Z}}$ on $U$ by \[U_{n}:= U\cap(\oplus_{i\leq n}k[\tilde{\underline{x}},\tilde{\underline{x}}^{-1}]\tilde{z}^{i}).\] Note that, $U_{n}=\emptyset$ for all $n<0, \tilde{z}\in U_{1}\backslash U_{0}, \tilde{\underline{x}}\in U_{0}\backslash U_{-1}, \tilde{y}\in U_{r}\backslash U_{r-1}$ and $\tilde{t}\in U_{rs}\backslash U_{rs-1}$. Using the relation $\tilde{\underline{x}}^{[d]}\tilde{y}=P(\underline{0}, \tilde{z})$ and $\tilde{\underline{x}}^{[e]}\tilde{t}= \tilde{y}^{s}$, we see that each element $\tilde{q}\in U$ can be written as
	\begin{equation}\label{eq: expaninU}
		\tilde{q}=\sum_{i=0}^{r-1}(\sum_{0\leq j <s} q_{ij}(\tilde{\underline{x}})\tilde{y}^{j}+ \sum_{\substack{0\leq j <s\\l>0}}h_{ijl}(\tilde{\underline{x}})\tilde{y}^{j}\tilde{t}^{l})\tilde{z}^{i},
	\end{equation}
	where $q_{ij}(\tilde{\underline{x}}), h_{ijl}(\tilde{\underline{x}})\in k[\tilde{\underline{x}}]$.

Let $\overline{U}$ denote the graded ring $\textup{Gr}(U)=\oplus_{n\in \mathbb{Z}}(U_{n}/U_{n-1})$ with respect to the above filtration. For $\tilde{q}\in U$, let $\overline{q}$ denote the image of $\tilde{q}$ in $\overline{U}$. It follows from \eqref{eq: expaninU} that the filtration just defined on $U$ is admissible with the generating set $\{\tilde{\underline{x}}, \tilde{y}, \tilde{z}, \tilde{t}\}$. So $\overline{U}$ is generated  by $\overline{\underline{x}}, \overline{y}, \overline{z}, \overline{t}$.
	
	Because $\tilde{\underline{x}}^{[d]}\tilde{y}, \tilde{z}^{r}\in U_{r}$ and $\tilde{\underline{x}}^{[d]}\tilde{y}-\tilde{z}^{r}= P(\underline{0}, \tilde{z})-\tilde{z}^{r}\in U_{r-1}$, we  have $\overline{\underline{x}}^{[d]}\overline{y}- \overline{z}^{r}= 0$ in $\overline{U}$. Also, by $\tilde{\underline{x}}^{[e]}\tilde{t},\tilde{y}^{s}\in U_{rs}$ and $\tilde{\underline{x}}^{[e]}\tilde{t}- \tilde{y}^{s}=0$ in $U$, we obtain that $\overline{\underline{x}}^{[e]}\overline{t}- \overline{y}^{s}=0$ in $\overline{U}$.  See $\overline{U}$ as a subring of $\textup{Gr}(k[\tilde{\underline{x}},\tilde{\underline{x}}^{-1},\tilde{z}])\cong k[\tilde{\underline{x}},\tilde{\underline{x}}^{-1},\tilde{z}]$, we observe that the elements $\overline{\underline{x}}$ and $\overline{z}$ of $\overline{U}$ are algebraically independent over $k$. By Lemma \ref{lem: integrity}, $V$ is a domain. Therefore, $\textup{Gr}(U)= \overline{U}\cong V$.
\end{proof}

\begin{lemma}\label{lem: invariantinV}
	Let $V$ be the domain defined by
	\begin{equation}
		\frac{k[\underline{X}, Y, Z, T]}{(\underline{X}^{[d]}Y-Z^{r}, \underline{X}^{[e]}T-Y^{s})}, \notag
	\end{equation}
where $[d], [e]\in \mathbb{Z}_{\geq 1}^{n}$, $r\geq 2$ and $s\geq 2$.
Denote by $\overline{\underline{x}}, \overline{y}, \overline{z}, \overline{t}$  the images of $\underline{X}, Y, Z, T$ in $V$ respectively. Consider $V= \oplus_{i\in \mathbb{Z}}V_{i}$ as a graded subring of $k[\overline{\underline{x}}, \overline{\underline{x}}^{-1}, \overline{z}]$ with \begin{equation}
		V_{i}= V\cap k[\overline{\underline{x}}, \overline{\underline{x}}^{-1}]\overline{z}^{i}\ \text{for each }\  i\geq 0 \ \text{and }\ V_{i}=0 \ \text{for }\ i<0.\notag
	\end{equation}
	Then $V^{\phi}\subseteq k[\overline{\underline{x}}]$ for any nontrivial homogeneous exponential map $\phi $ on the graded ring $V$.
\end{lemma}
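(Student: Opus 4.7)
The plan is to exploit that $V^{\phi}$ is automatically a graded subring of $V$ when $\phi$ is homogeneous, together with the factorial closedness of $V^{\phi}$ in $V$ (Lemma~\ref{lem:expproperty}(\ref{factorialproperty})) and the following structural fact about $V$: for each $i \geq 1$ the graded piece $V_i$ is a free cyclic $k[\overline{\underline{x}}]$-module, generated by a canonical monomial $w_i$ in $\overline{y}, \overline{z}, \overline{t}$. Once these are in place, the contradiction is immediate.

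The first step is to rule out $\overline{y}, \overline{z}, \overline{t}$ from $V^{\phi}$. If $\overline{z} \in V^{\phi}$, then factorial closedness applied to $\overline{\underline{x}}^{[d]}\overline{y} = \overline{z}^{r} \in V^{\phi}$ puts $\overline{y}$ and each $\overline{x}_{j}$ into $V^{\phi}$; applied then to $\overline{\underline{x}}^{[e]}\overline{t} = \overline{y}^{s} \in V^{\phi}$, it puts $\overline{t}$ into $V^{\phi}$, forcing $V^{\phi} = V$ and contradicting the nontriviality of $\phi$. If $\overline{y} \in V^{\phi}$, then $\overline{z}^{r} = \overline{\underline{x}}^{[d]}\overline{y} \in V^{\phi}$ gives $\overline{z} \in V^{\phi}$, reducing to the previous case; and if $\overline{t} \in V^{\phi}$ then $\overline{y}^{s} \in V^{\phi}$, hence $\overline{y} \in V^{\phi}$.

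For the second step, given $i \geq 0$, let $(a, b, c)$ be the unique triple with $0 \leq a < r$, $0 \leq b < s$, $c \geq 0$ and $i = a + br + crs$, and set $w_{i} := \overline{z}^{a}\overline{y}^{b}\overline{t}^{c}$. Using the relations $\overline{z}^{r} = \overline{\underline{x}}^{[d]}\overline{y}$ and $\overline{y}^{s} = \overline{\underline{x}}^{[e]}\overline{t}$, any monomial $\overline{z}^{a'}\overline{y}^{b'}\overline{t}^{c'}$ of total degree $i$ reduces to $\overline{\underline{x}}^{[\alpha]} w_{i}$ for some $[\alpha] \in \mathbb{Z}_{\geq 0}^{n}$, so $V_{i} = k[\overline{\underline{x}}] \cdot w_{i}$; freeness as a rank-one $k[\overline{\underline{x}}]$-module follows from the embedding of $V$ into $k[\overline{\underline{x}}, \overline{\underline{x}}^{-1}, \overline{z}]$, under which $w_{i}$ is identified with $\overline{\underline{x}}^{-[d]b - ([d]s+[e])c}\, \overline{z}^{i}$, a nonzero element.

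With these in hand, the conclusion is quick. Homogeneity of $\phi$ makes $V^{\phi} = \bigoplus_{i}(V^{\phi} \cap V_{i})$ a graded subring, so it suffices to prove $V^{\phi} \cap V_{i} = 0$ for every $i \geq 1$. Given a putative nonzero $f \in V^{\phi} \cap V_{i}$, I write $f = g\, w_{i}$ with $g \in k[\overline{\underline{x}}] \setminus \{0\}$ and invoke factorial closedness to conclude $w_{i} \in V^{\phi}$; since $i \geq 1$ forces at least one of $a, b, c$ to be positive, a second application of factorial closedness places one of $\overline{z}, \overline{y}, \overline{t}$ in $V^{\phi}$, contradicting the first step. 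The main point requiring care, rather than any conceptual obstacle, is the verification of the cyclic-module description of $V_{i}$; after that, the argument is driven entirely by the two defining relations and factorial closedness.
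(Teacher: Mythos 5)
Your structural observation is clean and correct: for the grading in question, $V_i = k[\overline{\underline{x}}]\,w_i$ where $w_i = \overline{z}^a\overline{y}^b\overline{t}^c$ with $i = a + br + crs$, $0\le a<r$, $0\le b<s$, $c\ge 0$, and the reduction via factorial closedness to showing that none of $\overline{z},\overline{y},\overline{t}$ lies in $V^{\phi}$ is a nice simplification of the paper's setup (the paper instead first shows $V^\phi$ lies in $R=k[\overline{\underline{x}},\overline{y},\overline{t}]$ and then analyzes the shape of homogeneous elements of $R$). The $\overline{z}$ case is handled correctly. The $\overline{y}$ case as written is slightly off -- you claim $\overline{z}^{r}=\overline{\underline{x}}^{[d]}\overline{y}\in V^{\phi}$ directly from $\overline{y}\in V^{\phi}$, but you do not yet know $\overline{\underline{x}}^{[d]}\in V^{\phi}$; however this is easily repaired by first noting $\overline{y}^{s}=\overline{\underline{x}}^{[e]}\overline{t}\in V^{\phi}$ and applying factorial closedness there, which is what the paper does.

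The real gap is the $\overline{t}$ case. You assert ``if $\overline{t}\in V^{\phi}$ then $\overline{y}^{s}\in V^{\phi}$,'' but this is the wrong direction: the relation is $\overline{\underline{x}}^{[e]}\overline{t}=\overline{y}^{s}$, so knowing $\overline{t}\in V^{\phi}$ does not put $\overline{y}^{s}=\overline{\underline{x}}^{[e]}\overline{t}$ into $V^{\phi}$ unless you already know $\overline{\underline{x}}^{[e]}\in V^{\phi}$, which you do not. Factorial closedness lets you go from ``the product is in $V^{\phi}$'' to ``the factors are,'' not conversely, and $\overline{t}$ does not appear to a power in any defining relation, so there is no analogue of the trick used for $\overline{y}$. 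This is exactly the case that the paper cannot dispatch by elementary manipulation of the two relations: it instead uses $\mathrm{trdeg}_k V^\phi = n$ to pick $n-1$ further invariant elements $b_1,\ldots,b_{n-1}\in k[\overline{\underline{x}},\overline{y}]$, passes to the one-dimensional ring $\dot A$ over $k(t,b_1,\ldots,b_{n-1})$, and invokes Lemma~\ref{lem: rigiditycrieterion} (rigidity of one-dimensional affine domains with a singular point) to show the extended exponential map must be trivial. Your proof needs some replacement for this step; as it stands, the case $i\in rs\,\mathbb{Z}_{>0}$ (i.e.\ $w_i=\overline{t}^{c}$) is not ruled out.
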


\begin{proof}
	Let $\phi$ be a homogeneous exponential map on $V$. This grading defines a degree function on $V$ with $\deg\overline{\underline{x}}=0, \deg \overline{z}=1, \deg \overline{y}=r$ and $\deg\overline{t}=rs$. Let
	\[R=\frac{k[\underline{X}, Y, T]}{(\underline{X}^{[e]}T-Y^{s})}.\]
	We can see $R$ as a subring of $V$ by identifying the images of $\underline{X}, Y\ \text{and }\ T$ in $R$ with $\overline{\underline{x}}, \overline{y},\overline{t}$ in $V$. Obviously, $R$ is embedded in $\oplus_{i\in r\mathbb{Z}}V_{i}$. Below we show that $V^{\phi}\subseteq R$.

Note that each element $f\in V$ can be uniquely written as
	$f=\sum_{i=0}^{r-1}f_{i}\overline{z}^{i},$ where $f_{i}\in R$. For if,
	\[\deg (f_{i}\overline{z}^{i})=\deg(f_{j}\overline{z}^{j})\ \text{for some }\ 0\leq i, j \leq r-1,\] then $\deg f_{i}+ i=\deg f_{j}+j$. Since $f_{i}, f_{j}\in R$, we have
	$\deg f_{i}-\deg f_{j}\equiv 0  \mod\  r,$ i.e., $i-j\equiv 0  \mod r$, which implies that $i=j$.

	Suppose  that $V^{\phi}\nsubseteq R$. Then as $V^{\phi} $ is a graded subring of $V$, we have $ f_{i}\overline{z}^{i}\in V^{\phi}$ for some $f_{i}\in R$ with $i>0$. By  Lemma \ref{lem:expproperty} (1), $\overline{z}\in V^{\phi}$, and using the equalities $\overline{\underline{x}}^{[d]}\overline{y}=\overline{z}^{r}$ and $\overline{\underline{x}}^{[e]}\overline{t}=\overline{y}^{s}$, we see that $\overline{\underline{x}}, \overline{y}, \overline{t}\in V^{\phi}$, i.e., $\phi$ is trivial, which is a contradiction. Hence $V^{\phi}\subseteq R$.
	
	Below we show that $V^{\phi}\subseteq k[\overline{\underline{x}}]$. Any element $q\in R$ can be written as
	
	\begin{equation}
		q=\sum_{0\leq i<s} q_{i}(\overline{\underline{x}}) \overline{y}^{i}  + \sum_{\substack{i>0\\ 0\leq j<s}} q_{ij}(\overline{\underline{x}}) \overline{t}^{i}\overline{y}^{j},\notag		
	\end{equation}
	where $q_{i}, q_{ij}\in k[\overline{\underline{x}}]$. Note that
	
	\begin{equation}	
		\begin{split}
				\deg(q_{i}(\overline{\underline{x}})\overline{y}^{i})&= ir<sr \ \text{if}\ i<s\ \text{and }\ \\ \deg(q_{ij}(\overline{\underline{x}})\overline{t}^{i}\overline{y}^{j})&=(irs+jr) \
			\text{if} \ i>0, 0\leq j <s.
		\end{split}		
		\end{equation}
	Thus a homogeneous element of $V$ in $R$ is of the form $q_{i}(\overline{\underline{x}})\overline{y}^{i}$ for some $0\leq i<s$ or $q_{ij}(\overline{\underline{x}})\overline{t}^{i}\overline{y}^{j}$ for some $i>0\ \text{and }\ 0\leq j <r$. As $V^{\phi }$ is a graded subring of $V$, we have either $q_{i}(\overline{\underline{x}})\overline{y}^{i}\in V^{\phi}$ for some $0\leq i<s\ \text{or}\ q_{ij}(\overline{\underline{x}})\overline{t}^{i}\overline{y}^{j}\in V^{\phi}$ for some $i>0\ \text{and }\ 0\leq j<r$.

Suppose that $q_{ij}(\overline{\underline{x}})\overline{t}^{i}\overline{y}^{j}\in V^{\phi}$ for some $i>0\ \text{and }\ 0\leq j<r$. Then, $\overline{t}\in V^{\phi}$. As, by Lemma \ref{lem:expproperty} (4), $\textup{trdeg}_kV^{\phi}=\textup{trdeg}_k V-1=(n+1)-1=n$, we may choose $n-1$ algebraically independent homogeneous elements in $V^{\phi}$, say $b_{1},\ldots, b_{n-1}$.
	Note that it is no harm to let $b_{1}=h_{1}(\underline{x}, y),\ldots, b_{n-1}=h_{n-1}(\underline{x}, y)$ all containing no constant terms. Then the ring
\[\dot{A}:=\frac{k(t, b_{1},\ldots, b_{n-1})[\underline{x}, y, z, t]}{(b_{1}-h_{1}(\underline{x},y),\ldots,~ b_{n-1}-h_{n-1}(\underline{x},y),~ \underline{x}^{[d]}y-z^{r},~ \underline{x}^{[e]}t-y^{s})}\] is of one-dimensional  over $k(t,b_1,\ldots, b_{n-1})$ and $\phi$ can be extended to $\dot{A}$. But it's easily to see that the ring $\dot{A}$ is singular at the zero point, 
 and thus $\dot{A}$ is a rigid ring due to Lemma \ref{lem: rigiditycrieterion}, and we have the extension $\dot{\phi}$ of $\phi$ from $V$ to $\dot{A}$ is trivial, which is a contradiction. 
	
	Hence $q_{i}(\overline{\underline{x}})\overline{y}^{i}\in V^{\phi}$ for some $0\leq i<s$. If $i>0$, then $\overline{y}\in V^{\phi}$ since $V^{\phi}$ is factorially closed in $V$. Using the relations $\overline{\underline{x}}^{[e]}\overline{t}=\overline{y}^{s}$ and $\overline{\underline{x}}^{[d]}\overline{y}= \overline{z}^{r}$, we get, $\overline{\underline{x}}, \overline{z},\overline{t}\in V^{\phi}$, i.e., $\phi$ is trivial, which is a contradiction. Therefore, $i=0$ and $V^{\phi}\subseteq k[\overline{\underline{x}}]$.
\end{proof}

\begin{lemma}\label{lem: examplemap}
There exists a nontrivial exponential map $\phi$ on $A(L_{[d],[e]})$ such that $A(L_{[d],[e]})^{\phi}= k[\underline{x}]$.
\end{lemma}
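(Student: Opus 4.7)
The plan is to construct $\phi_W$ explicitly by fixing $\phi_W(x_i)=x_i$ for every $i$, setting $\phi_W(z)=z+\underline{x}^{[d]}\underline{x}^{[e]}W$, and then letting the defining relations of $A:=A(L_{[d],[e]})$ dictate
\[
\phi_W(y):=\underline{x}^{-[d]}P(\underline{x},\phi_W(z)),\qquad \phi_W(t):=\underline{x}^{-[e]}Q(\underline{x},\phi_W(y),\phi_W(z)),
\]
which a priori live only in $A[\underline{x}^{-1}][W]$.

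The first step is to verify that $\phi_W(y)$ and $\phi_W(t)$ actually lie in $A[W]$. Since every monomial of $P(\underline{x},z+u)-P(\underline{x},z)\in k[\underline{x},z,u]$ contains $u$ to a positive power, substituting $u=\underline{x}^{[d]}\underline{x}^{[e]}W$ shows $P(\underline{x},\phi_W(z))-P(\underline{x},z)\in \underline{x}^{[d]}\underline{x}^{[e]}\cdot k[\underline{x},z,W]$, and after dividing through by $\underline{x}^{[d]}$ every surviving term still carries an $\underline{x}^{[e]}$ factor. Combined with $P(\underline{x},z)=\underline{x}^{[d]}y$ in $A$, this gives $\phi_W(y)=y+\underline{x}^{[e]}R_1$ for some $R_1\in A[W]$. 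Since both $\phi_W(z)-z$ and $\phi_W(y)-y$ are then divisible by $\underline{x}^{[e]}$, the same divisibility principle applied to $Q$ produces $Q(\underline{x},\phi_W(y),\phi_W(z))-Q(\underline{x},y,z)\in \underline{x}^{[e]}\cdot A[W]$, which combined with $Q(\underline{x},y,z)=\underline{x}^{[e]}t$ gives $\phi_W(t)\in A[W]$. The defining relations are preserved by construction, so $\phi_W$ descends to a $k$-algebra homomorphism $A\to A[W]$. The two exponential-map axioms reduce to a check on the single generator $z$ (since $y$ and $t$ are algebraically determined from $z,\underline{x}$), where both $\epsilon_0\phi_W=\mathrm{id}$ and $\phi_T\phi_W=\phi_{T+W}$ are transparent. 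Nontriviality is immediate from $\phi_W(z)\neq z$.

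The inclusion $k[\underline{x}]\subseteq A^\phi$ holds by construction. For the reverse, note that $A[\underline{x}^{-1}]=k[\underline{x},\underline{x}^{-1}][z]$, and that the natural extension of $\phi$ to this localization (licit because $x_1,\dots,x_n\in A^\phi$) acts as pure translation $z\mapsto z+\underline{x}^{[d]}\underline{x}^{[e]}W$ over $k[\underline{x},\underline{x}^{-1}]$, whose invariant ring is visibly $k[\underline{x},\underline{x}^{-1}]$. Lemma~\ref{lem:expproperty}(6) then gives $A^\phi[\underline{x}^{-1}]=k[\underline{x},\underline{x}^{-1}]$, whence $A^\phi\subseteq A\cap k[\underline{x},\underline{x}^{-1}]$.

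The remaining step, which I expect to be the main obstacle, is the identity $A\cap k[\underline{x},\underline{x}^{-1}]=k[\underline{x}]$. Given $f$ in this intersection, choose $[M]\in\mathbb{Z}_{\geq 0}^{n}$ so that $\underline{x}^{[M]}f=g\in k[\underline{x}]$, and induct componentwise on $[M]$: I would show that each $x_i$ appearing in the denominator actually divides $g$ in $k[\underline{x}]$ itself. The key input is the injectivity of the natural map $k[x_2,\dots,x_n]\hookrightarrow A/(x_1)$; indeed, since $d_1,e_1\geq 1$, we have $A/(x_1)\cong k[x_2,\dots,x_n,Y,Z,T]/(P(0,x_2,\dots,x_n,Z),\,Q(0,x_2,\dots,x_n,Y,Z))$, and successively reducing modulo these two relations (monic in $Z$ and in $Y$ respectively) exhibits $A/(x_1)$ as a free module of rank $rs$ over $k[x_2,\dots,x_n,T]$. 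Hence $x_1\mid g$ in $A$ forces $x_1\mid g$ in $k[\underline{x}]$; applying the analogous argument to each $x_i$ descends $f$ into $k[\underline{x}]$.
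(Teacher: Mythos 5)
Your construction of $\phi$ is exactly the paper's map (the paper writes $\underline{x}^{([d]+[e])}$ where you write $\underline{x}^{[d]}\underline{x}^{[e]}$), and your verification that $\phi_W(y),\phi_W(t)\in A[W]$ via the Taylor-expansion/divisibility argument is a correct (and more detailed) version of what the paper dispatches with the notation $y+W\theta$, $t+W\mu$. Where the two proofs genuinely diverge is in the reverse inclusion $A^\phi\subseteq k[\underline{x}]$. The paper argues in one line: $k[\underline{x}]$ is algebraically closed in $A$ and $\mathrm{trdeg}_{k[\underline{x}]}A=1$, so since $A^\phi$ is algebraically closed in $A$ with $\mathrm{trdeg}_k A^\phi=n$ (Lemma~\ref{lem:expproperty} (2),(4)), equality follows. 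You instead localize: extend $\phi$ to $A[\underline{x}^{-1}]=k[\underline{x},\underline{x}^{-1}][z]$ via Lemma~\ref{lem:expproperty}(6), identify the invariants there as $k[\underline{x},\underline{x}^{-1}]$, and then prove the purely ring-theoretic statement $A\cap k[\underline{x},\underline{x}^{-1}]=k[\underline{x}]$ from the freeness of $A/(x_i)$ (of rank $rs$) over $k[x_1,\ldots,\hat{x}_i,\ldots,x_n,T]$, which follows because $P(\ldots,0,\ldots,Z)$ is monic in $Z$ and $Q(\ldots,0,\ldots,Y,Z)$ is monic in $Y$. Both arguments are correct. Your version is longer, but it is also more self-contained: the paper's assertion that $k[\underline{x}]$ is algebraically closed in $A$ is left unproved, and when one unwinds it (pass to the fraction field $k(\underline{x},z)$, note $k(\underline{x})$ is algebraically closed there, then intersect back with $A$) it reduces to exactly the same identity $A\cap k[\underline{x},\underline{x}^{-1}]=k[\underline{x}]$ that you establish explicitly. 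One small sloppiness on your side: ``the two exponential-map axioms reduce to a check on the single generator $z$'' should really say ``on the generators $\underline{x},z$ of the localization $A[\underline{x}^{-1}]$'', with the descent to $A$ being automatic because $A\hookrightarrow A[\underline{x}^{-1}]$; as written it elides the fact that $y,t$ are only rational in $\underline{x},z$, not polynomial.
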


\begin{proof} Let $A:=A(L_{[d],[e]})$. Consider the $k$-algebra homomorphism $\phi: A \rightarrow A[W]$ defined by,
	\begin{align}
		&\phi(x_{i})=x_{i}, \ \text{for}\ i\in \{1,\ldots, n\}\\ \notag
		&\phi(z)=z+\underline{x}^{([d]+[e])}W,\\ \notag
		&\phi(y)=\frac{P(\underline{x}, z+\underline{x}^{([d]+[e])}W)}{\underline{x}^{[d]}}=y+W\theta(\underline{x}, z, W),\\ \notag
		&\phi(t)=\frac{Q(\underline{x}, y+W\theta(\underline{x},z,W), z+\underline{x}^{([d]+[e])}W)}{\underline{x}^{[e]}}=t+W\mu(\underline{x}, y, z, W),
	\end{align}
	where $\theta(\underline{x}, z, W)\in k[\underline{x},z, W]$ and $\mu(\underline{x}, y, z, W)\in k[\underline{x}, y, z, W] $. One may verify that $\phi$ is an exponential map on $A$. Clearly $k[\underline{x}]\subseteq A^{\phi}$. Since $k[\underline{x}]$ is algebraically closed in $A$ and $\textup{trdeg}_{k[\underline{x}]}A=1$, we have $A^{\phi}=k[\underline{x}]$.
\end{proof}

\begin{theorem}\label{thm: ML-invariantofL}
	Let $A(L_{[d],[e]})$ be as in \eqref{eq: A} and suppose that the parameters $r, s$ and $e$ in $A(L_{[d],[e]})$ satisfy the conditions of \eqref{lem: invariantinV}. Then $\mathrm{ML}(A(L_{[d],[e]}))=k[\underline{x}]$.
\end{theorem}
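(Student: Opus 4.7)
The plan is to prove $\mathrm{ML}(A(L_{[d],[e]})) = k[\underline{x}]$ by showing both inclusions. For $\mathrm{ML}(A(L_{[d],[e]})) \subseteq k[\underline{x}]$, I would invoke Lemma~\ref{lem: examplemap} directly: it produces a nontrivial exponential map $\phi$ on $A(L_{[d],[e]})$ whose invariant ring is exactly $k[\underline{x}]$, so the intersection defining $\mathrm{ML}$ sits inside this particular invariant ring.

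For the reverse inclusion $k[\underline{x}] \subseteq \mathrm{ML}(A(L_{[d],[e]}))$, it suffices to show $A(L_{[d],[e]})^{\phi} = k[\underline{x}]$ for every nontrivial exponential map $\phi$. Writing $A = A(L_{[d],[e]})$, I would iterate Theorem~\ref{thm: invariantofhomogeneneousexp} along the chain of admissible proper $\mathbb{Z}$-filtrations provided by Lemmas~\ref{lem: reducetoU} and~\ref{lem: reducetoV}, inducing nontrivial homogeneous exponential maps $\phi^{(1)}, \ldots, \phi^{(n)}$ on $A^{(1)}, \ldots, A^{(n)} = U$ and finally $\overline{\phi}$ on $V$, together with containments $\rho_j\bigl((A^{(j)})^{\phi^{(j)}}\bigr) \subseteq (A^{(j+1)})^{\phi^{(j+1)}}$ at each step. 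By Lemma~\ref{lem: invariantinV}, $V^{\overline{\phi}} \subseteq k[\overline{\underline{x}}]$; combined with $\textup{trdeg}_k V^{\overline{\phi}} = n$ (Lemma~\ref{lem:expproperty}~(4)) and the algebraic closedness of $V^{\overline{\phi}}$ in $V$ (Lemma~\ref{lem:expproperty}~(2)), this forces $V^{\overline{\phi}} = k[\overline{\underline{x}}]$.

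I would then propagate this equality backward through the chain by descending induction, aiming to establish $(A^{(j)})^{\phi^{(j)}} = k[\underline{x}]$ at each level. The step $U \leftarrow V$ is the cleanest: in the $\tilde{z}$-filtration of Lemma~\ref{lem: reducetoV} one has $U_0 = k[\tilde{\underline{x}}]$, so $\rho(U^{\phi^{(n)}}) \subseteq V^{\overline{\phi}} = k[\overline{\underline{x}}] \subseteq V_0$ forces $U^{\phi^{(n)}} \subseteq U_0 = k[\tilde{\underline{x}}]$, and the trdeg-plus-algebraic-closedness argument promotes this to equality. For each intermediate step $A^{(j)} \leftarrow A^{(j+1)}$ given by the $x_{j+1}$-filtration, given $f \in (A^{(j)})^{\phi^{(j)}}$ with $\rho_j(f) \in k[\underline{x}]$, the induced grading (in which $\tilde{x}_{j+1}$ has degree $-1$ and each other $\tilde{x}_i$ has degree $0$) forces $\rho_j(f) = \tilde{x}_{j+1}^{-m}\, g(\tilde{x}_1, \ldots, \widehat{\tilde{x}}_{j+1}, \ldots, \tilde{x}_n)$ with $m \leq 0$. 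A transcendence-degree argument—using that $(A^{(j)})^{\phi^{(j)}}$ has transcendence degree $n$ while $k[\tilde{x}_1, \ldots, \widehat{\tilde{x}}_{j+1}, \ldots, \tilde{x}_n]$ has only $n-1$, and that $\rho_j$ preserves algebraic independence—then forces the existence of some invariant with $m < 0$, whence factorial closedness (Lemma~\ref{lem:expproperty}~(1)) places $x_{j+1}$ in $(A^{(j)})^{\phi^{(j)}}$. Iterating over all variables and invoking algebraic closedness closes the inductive step.

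The main obstacle is this intermediate descending step, which requires delicately combining symbol analysis, factorial closedness, and the algebraic-independence preservation of $\rho_j$; unlike the $U \leftarrow V$ transition, at intermediate levels $A^{(j)}_0$ strictly contains $k[\underline{x}]$ (it still involves $y$, $t$, and negative powers of other $x_i$'s), so the containment $\rho_j(f) \in k[\underline{x}]$ in the graded ring does not immediately yield $f \in k[\underline{x}]$ upstairs. Once the induction goes through, we obtain $A^{\phi} = k[\underline{x}]$ for every nontrivial $\phi$, hence $k[\underline{x}] \subseteq \mathrm{ML}(A(L_{[d],[e]}))$, completing the proof.
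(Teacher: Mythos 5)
Your proposal follows essentially the same route as the paper: use Lemma~\ref{lem: examplemap} for the inclusion $\mathrm{ML}(A)\subseteq k[\underline{x}]$, and for the reverse inclusion, push an arbitrary nontrivial exponential map through the chain of admissible filtrations from Lemmas~\ref{lem: reducetoU} and~\ref{lem: reducetoV} via Theorem~\ref{thm: invariantofhomogeneneousexp}, apply Lemma~\ref{lem: invariantinV} at the bottom level $V$, and then propagate the containment back up the chain. Your treatment of the $U\leftarrow V$ step is exactly right: since the $\tilde z$-filtration on $U$ satisfies $U_m=0$ for $m<0$ and $U_0=k[\tilde{\underline{x}}]$, the containment $\rho_n(U^{\phi^{(n)}})\subseteq V_0$ forces $U^{\phi^{(n)}}\subseteq U_0$, and transcendence degree plus algebraic closedness gives equality. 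You have also correctly identified the delicate point: at each intermediate level the degree-zero piece $A^{(j)}_0$ of the $x_{j+1}$-filtration is strictly larger than $k[\underline{x}]$ (it contains, e.g., $x_{j+1}^{d_{j+1}}y$), so $\rho_j(f)\in k[\tilde{\underline{x}}]$ does \emph{not} immediately give $f\in k[\underline{x}]$. The paper handles these intermediate descents in a single sentence (``again from the filtration \ldots it follows that''), so your concern is legitimate and well-placed.

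However, the repair you sketch for the intermediate step has a genuine gap of its own. You propose to place each $x_{j+1}$ into $(A^{(j)})^{\phi^{(j)}}$ via factorial closedness (Lemma~\ref{lem:expproperty}~(1)), by producing an invariant $f$ whose symbol $\rho_j(f)$ has a positive power of $\tilde x_{j+1}$. Even granting that such an $f$ exists, factorial closedness of the invariants ring yields $x_{j+1}\in(A^{(j)})^{\phi^{(j)}}$ only once you know that $x_{j+1}$ divides $f$ \emph{in the ring} $A^{(j)}$, not merely in the overring $k[\underline{x},\underline{x}^{-1},z]$ where the $x_{j+1}$-degree is read off. This is not automatic: $x_{j+1}$ is not a prime element of $A^{(j)}$ (setting $x_{j+1}=0$ forces $y^s=0$, so the quotient has nilpotents), and the passage from ``positive $x_{j+1}$-degree of the symbol'' to ``divisibility by $x_{j+1}$ in $A^{(j)}$'' would need to be argued directly from the normal-form expansion~\eqref{eq: expaninL}. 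Your proposal does not supply this. Moreover, the closing phrase ``iterating over all variables'' glosses over the fact that each filtration in the chain distinguishes only the single variable $x_{j+1}$; it is not clear how to obtain all of $x_1,\ldots,x_n$ in the invariant ring of a single fixed level $j$ from this scheme. So while your outline accurately mirrors the structure of the paper's proof and correctly flags its weakest point, the crucial descending step is left incomplete.
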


\begin{proof} Let $A^{(0)}:=A(L_{[d],[e]})$. We will show that $(A^{(0)})^{\phi_0}=k[\underline{x}]$ for any nontrivial exponential map $\phi_0$ of $A^{(0)}$.
By Lemma \ref{lem: reducetoU}, there exists a sequence of domains $A^{(1)},A^{(2)},\ldots,A^{(n)}$ such that for each $0\leq j\leq n-1$ there exists an admissible proper filtration on $A^{(i)}$ such that $\textup{Gr}(A^{(j)})\cong A^{(j+1)}$, $0\leq j\leq n-1$ and
\begin{equation}
	A^{(n)}=\frac{k[\underline{X}, Y, Z, T]}{(\underline{X}^{[d]}Y- P(\underline{0},Z), \underline{X}^{[e]}T- Y^{s})}.
\end{equation}
Let $A^{(n+1)}:=V=\frac{k[\underline{X}, Y, Z ,T]}{(\underline{X}^{[d]}Y - Z^{r}, \underline{X}^{[e]}T-Y^{s})}.$
By Lemma \ref{lem: reducetoV}, there exists an admissible proper filtration on $A^{(n)}$ such that
\[\textup{Gr}(A^{(n)})\cong A^{(n+1)}=V.\]
For each $0\leq j\leq n$, denote by $\rho_j: A^{(j)}\rightarrow \textup{Gr}(A^{(j)})=A^{(j+1)}$ the canonical map.

Note that $\phi_0$ induces a nontrivial exponential map $\phi_1$ on $A^{(1)}$ such that $\rho_0((A^{(0)})^{\phi_0}) \subseteq (A^{(1)})^{\phi_1}$. Let $p\in (A^{(0)})^{\phi_0}$. Replacing $p$ by $p-\lambda$ for some $\lambda\in k^*$, we may assume that $\rho_0(p)\notin k$. In the same way, $\phi_1$ induces a nontrivial exponential map $\phi_2$ on $A^{(2)}$ such that $\rho_1((A^{(1)})^{\phi_1})\subseteq (A^{(2)})^{\phi_2}$. In turn, we obtain nontrivial exponential maps $\phi_j$ on $A^{(j)}$, $j=0,1,\ldots,n$ such that $\rho_j((A^{(j)})^{\phi_j})\subseteq (A^{(j+1)})^{\phi_{j+1}}$, and $\rho_n\rho_{n-1}\cdots\rho_0(p)\in V^{\phi_{n+1}}$.

By Lemma \ref{lem: invariantinV}, $V^{\phi_{n+1}}\subseteq k[\overline{\underline{x}}]$, where $\overline{\underline{x}}$ denotes the image of $\underline{x}$ in $V$. Hence $\rho_n\rho_{n-1}\cdots\rho_0(p)\in k[\overline{\underline{x}}]$. It follows from the filtration defined on $A^ {(n)} $ in Lemma \ref{lem: reducetoV} and the equality \eqref{eq: expaninU} that $\rho_{n-1}\cdots\rho_0(p)\in k[\tilde{\underline{x}}]$, where $\tilde{\underline{x}}$ denotes the image of $\underline{x}$ in $A^{(n)}$. Again from the filtration defined on $A^{(n-1)}$ in Lemma \ref{lem: reducetoU}, it follows that $\rho_{n-2}\cdots\rho_0(p)\in k[\tilde{\underline{x}}]\subseteq A^{(n-1)}$, where $\tilde{\underline{x}}$ denotes the image of $\underline{x}$ in $A^{(n-1)}$. By this way, after finite steps, we  get that \[(A^{(0)})^{\phi_0}\subseteq k[\underline{x}].\] Since $(A^{(0)})^{\phi_0}$ is factorially closed in $A^{(0)}$ and is of transcendence degree $n$ over $k$, we have $(A^{(0)})^{\phi_0}=k[\underline{x}]$ for any nontrivial exponential map $\phi_0$ of $A^{(0)}$. And by Lemma \ref{lem: examplemap}, there does exist a nontrivial exponential map $\phi_0$ of $A^{(0)}$, and thus $\mathrm{ML}(A^{(0)})=k[\underline{x}]$.
\end{proof}

	\section{Locally nilpotent derivations of double Danielewski varieties}
Throughout this section, $k$ will denote a field of characteristic zero.
Similar to \cite{gupta2020locally}, we give a characterization of locally nilpotent derivations of double Danielewski varieties.
\begin{theorem}\label{thm: computeof LND}
Consider the coordinate ring of a double Danielewski variety
		\begin{equation}
		A(L_{[d],[e]}):=\frac{k[\underline{X},Y,Z, T]}{(\underline{X}^{[d]}Y - P(\underline{X},Z), \underline{X}^{[e]}T - Q(\underline{X},Y,Z))}\notag
	\end{equation} where $[d], [e]\in \mathbb{Z}_{\geq 1}^{n}$.
	Then any locally nilpotent derivation of $A(L_{[d],[e]})$ is of the form $\delta D$, where $\delta\in k[\underline{x}]$ and $D$ is given	by
	\begin{align}
		&D(x_{i})=0, \text{~for~all}~i\in \{1,\ldots, n\}, \notag \\
		&D(z) =\underline{x}^{[d]+[e]},  \notag        \\
		&D(y) = \frac{\partial P(\underline{x},z)}{\partial z}\underline{x}^{[e]}, \notag   \\
		&D(t) =\frac{\partial Q(\underline{x},y, z)}{\partial y}\frac{\partial P(\underline{x},z)}{\partial z}+ \frac{\partial Q(\underline{x}, y, z)}{\partial z}\underline{x}^{[d]}.	\notag
	\end{align}
\end{theorem}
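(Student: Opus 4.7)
The plan is to show that every LND $\partial$ of $A:=A(L_{[d],[e]})$ is a $k[\underline{x}]$-multiple of the prescribed derivation $D$. First, I would verify that $D$ is itself an LND: the identity $D(\underline{x}^{[d]}y-P)=\underline{x}^{[d]}D(y)-\frac{\partial P}{\partial z}D(z)=0$ (and its analogue for the $t$-relation) shows $D$ descends to a $k$-derivation of $A$, and local nilpotency follows because $D(x_i)=0=D(\underline{x}^{[d]+[e]})$, while iterating $D$ on $y$ and on $t$ strictly drops the $z$-degree of their images in $k[\underline{x}^{\pm1}][z]$ each time, killing them in finitely many steps. For an arbitrary LND $\partial$, the case $\partial=0$ is trivial; otherwise, by Theorem~\ref{thm: ML-invariantofL} (whose proof gives $\ker\partial\subseteq k[\underline{x}]$ for every nontrivial $\partial$, while $k[\underline{x}]=\mathrm{ML}(A)\subseteq\ker\partial$ is automatic) we obtain $\ker\partial=k[\underline{x}]$, so in particular $\partial(x_i)=0$ for all $i$.

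Since $\underline{x}^{[d]+[e]}\in k[\underline{x}]=\ker\partial=\ker D$, both $\partial$ and $D$ extend uniquely to LNDs of the localization $A[\underline{x}^{-1}]=k[\underline{x}^{\pm1}][z]$. A $k[\underline{x}^{\pm1}]$-derivation of this polynomial ring has the form $g\,\frac{\partial}{\partial z}$ for a unique $g\in k[\underline{x}^{\pm1}][z]$, and the standard identity $\deg_z\bigl(g\,\frac{\partial f}{\partial z}\bigr)=\deg_z g+\deg_z f-1$ shows that being an LND is equivalent to $g\in k[\underline{x}^{\pm1}]$. Applying this to both $\partial$ and $D$ (for which $g=\underline{x}^{[d]+[e]}$) yields a unique $\delta\in k[\underline{x}^{\pm1}]$ with $\partial=\delta D$ on $A[\underline{x}^{-1}]$; the formulas for $\partial(y)$ and $\partial(t)$ stated in the theorem then follow by direct calculation from the two defining relations.

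The main remaining obstacle is to upgrade $\delta\in k[\underline{x}^{\pm1}]$ to $\delta\in k[\underline{x}]$. I would do this in two sub-steps. First, I establish the auxiliary identity $A\cap k[\underline{x}^{\pm1}]=k[\underline{x}]$: under the embedding $A\hookrightarrow k[\underline{x}^{\pm1}][z]$, a monomial $\underline{X}^{[\alpha]}Y^a Z^b T^c$ in the normal form \eqref{eq: expaninL} evaluates with leading $z$-term $\underline{x}^{[\alpha]-a[d]-c(s[d]+[e])}z^{ra+b+rsc}$, and the constraints $[\alpha]\ngeq[d]$ (resp.\ $[\alpha]\ngeq[e]$) in the expansion rule out any cancellation of leading $z$-terms between distinct normal-form types, so an element of $A$ whose image has $z$-degree $0$ must come entirely from the $f_0(\underline{x},z)$ summand with $\deg_z f_0=0$, hence lies in $k[\underline{x}]$. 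Applying this to $\partial(z)=\delta\,\underline{x}^{[d]+[e]}$ gives $G:=\delta\,\underline{x}^{[d]+[e]}\in k[\underline{x}]$. It then remains to force $\underline{x}^{[d]+[e]}\mid G$ in $k[\underline{x}]$. For this I use the conditions $\partial(y)=G\,\frac{\partial P}{\partial z}/\underline{x}^{[d]}\in A$ and $\partial(t)=G\bigl(\frac{\partial Q}{\partial y}\frac{\partial P}{\partial z}+\underline{x}^{[d]}\frac{\partial Q}{\partial z}\bigr)/\underline{x}^{[d]+[e]}\in A$: since $P$ is monic in $z$ of degree $r\geq 2$, the reduction $\frac{\partial P}{\partial z}|_{x_i=0}$ is a nonzero polynomial in $z$, so $x_i\nmid\frac{\partial P}{\partial z}$ in $k[\underline{x},z]$; likewise, since $Q$ is monic in $y$ of degree $s\geq 2$, $x_i\nmid\frac{\partial Q}{\partial y}$. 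The normal-form analysis shows that an element of $A$ of $z$-degree exactly $r-1$ has its leading $z^{r-1}$-coefficient in $k[\underline{x}]$; applied to $\partial(y)$, whose leading is $rG/\underline{x}^{[d]}$, this forces $\underline{x}^{[d]}\mid G$. The analogous normal-form analysis at $z$-degree $rs-1$ (now the admissible leading coefficients form a larger set determined by the $a$-terms with $j=s-1$) together with the leading $srG/\underline{x}^{s[d]+[e]}$ of $\partial(t)$ forces $\underline{x}^{[e]}\mid G/\underline{x}^{[d]}$. Combining, $G\in\underline{x}^{[d]+[e]}\,k[\underline{x}]$, and therefore $\delta\in k[\underline{x}]$. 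I expect this last denominator bookkeeping to be the technical heart of the proof, extending the one-variable treatment of \cite{gupta2020locally} componentwise in each $x_i$ and recombining via unique factorization in $k[\underline{x}]$.
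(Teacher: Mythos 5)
Your overall plan matches the paper's: use $\mathrm{ML}(A)=k[\underline{x}]$ to conclude $\ker\partial=k[\underline{x}]$, extend $\partial$ to $A[\underline{x}^{-1}]=k[\underline{x}^{\pm1}][z]$ to deduce $\partial(z)\in k[\underline{x}]$, and then reduce everything to showing $\underline{x}^{[d]+[e]}\mid\partial(z)$. That reduction is precisely what the paper calls the ``Claim'', and it is where all the work is. Your first divisibility, $\underline{x}^{[d]}\mid G$, is plausibly recoverable by your leading-$z$-coefficient argument (the paper does it more cleanly by noting $G\,\partial P/\partial z\in\underline{x}^{[d]}A\cap k[\underline{x},z]=(\underline{x}^{[d]},P)k[\underline{x},z]$ and then reducing mod each $x_j$), but your second divisibility is where the argument has a genuine gap.

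Specifically, you assert that the leading $z$-term of $\partial(t)$ in $k[\underline{x}^{\pm1}][z]$ is $srG\,z^{rs-1}/\underline{x}^{s[d]+[e]}$. That is only true when the image of $\partial Q/\partial y$ in $k[\underline{x}^{\pm1}][z]$ has its top $z$-term coming from $sy^{s-1}$ and when $\underline{x}^{[d]}\,\partial Q/\partial z$ does not dominate; neither is guaranteed, because the coefficients $q_j(\underline{x},z)$ in $Q=y^s+q_{s-1}y^{s-1}+\cdots$ can have arbitrarily large $z$-degree. Even granting a correct leading term, you then need to characterise the leading $z$-coefficients of elements of $A$ at $z$-degree $rs-1$ (or whatever the true top degree is), and there the contributing normal-form monomials include not only $a$-terms with $j=s-1$ but also $b$- and $c$-terms involving $t$, and one must rule out cancellations among them; you flag the set as ``larger'' but do not analyze it. The paper avoids all of this $z$-degree bookkeeping by staying in the \emph{polynomial} ring $k[\underline{X},Y,Z]$: it records $D(t)\in A$ as the ideal membership $\big(\tfrac{\partial Q}{\partial Y}\tfrac{\partial P}{\partial Z}+\underline{X}^{[d]}\tfrac{\partial Q}{\partial Z}\big)\alpha(\underline{X})\in(\underline{X}^{[e]},\underline{X}^{[d]}Y-P,Q)$, reduces the cofactors modulo $Q$ to get $Y$-degree $<s$, and then splits into Case~1 ($\deg_Y(Y\beta_2)<s$) and Case~2 ($\deg_Y(Y\beta_2)=s$), the latter being needed exactly because a hidden $Q$-multiple can appear after multiplying by $Y$. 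Your sketch has no analogue of this case split, and without it the conclusion $\underline{x}^{[e]}\mid G/\underline{x}^{[d]}$ does not follow. If you want to keep your normal-form framing, you would still need to reproduce the substance of the paper's two-case division-by-$Q$ argument; as written, this step is a gap.
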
	

\begin{proof} Let $A:=A(L_{[d],[e]})$ and let $D$ be any locally nilpotent derivation of $A$. By Theorem \ref{thm: ML-invariantofL}, $\mathrm{ML}(A)=k[\underline{x}]$. Since $D(\underline{x})=0$, $D$ can be extended to a $k$-linear locally nilpotent derivation of $A[x^{-1}]= k[\underline{x},\underline{x}^{-1},z]$. Thus $D(z)\in k[\underline{x},\underline{x}^{-1}]\cap A=k[\underline{x}]$. Let $D(z)=r(\underline{x})$, where $r(\underline{x})\in k[\underline{x}]$.

By
	$D(\underline{x}^{[d]}y-P(\underline{x},z))=0$, we have \[\underline{x}^{[d]}D(y)-\frac{\partial P(\underline{x},z)}{\partial z}D(z)=0,\] and by $D(\underline{x}^{[e]}t-Q(\underline{x}, y, z))=0$, we get
		\[\underline{x}^{[e]}D(t)-\frac{\partial Q(\underline{x}, y, z)}{\partial y}D(y)- \frac{\partial Q(\underline{x}, y, z)}{\partial z}D(z)=0.\] Hence
	\begin{align}
		D(z)&=r(\underline{x}),\notag\\
		D(y)&= \frac{\frac{\partial P(\underline{x}, z)}{\partial z}r(\underline{x})}{\underline{x}^{[d]}}, \notag\\
		D(t)&=\frac{\frac{\partial Q(\underline{x}, y , z )}{\partial y}\frac{\partial P(\underline{x}, z)}{\partial z}r(\underline{x})+ \frac{\partial Q(\underline{x}, y, z)}{\partial z}\underline{x}^{[d]}r(\underline{x})}{\underline{x}^{[d]+[e]}}.\notag
	\end{align}
	So it remains to show that $r(\underline{x})\in \underline{x}^{[d]+[e]}k[\underline{x}]$.	
	
	From $D(y)\in A$, we have
	\[\frac{\partial P(\underline{x},z)}{\partial z}r(\underline{x})\in \underline{x}^{[d]}A \cap k[\underline{x}, z]= (\underline{x}^{[d]}, P(\underline{x}, z))k[\underline{x}, z].\]
	Hence
	\begin{equation}\label{eq: numeratorofDyexpansion}
		\frac{\partial P(\underline{x}, z)}{\partial z}r(\underline{x}) = \underline{x}^{[d]}g_{1}(\underline{x}, z) + P(\underline{x}, z)g_{2}(\underline{x}, z)
	\end{equation}
	for some $g_{1}, g_{2}\in k^{[2]}$. We note that $k[\underline{x},z]$ is isomorphic to a polynomial ring in two variables.
	
For every $i\in \{1,\ldots,n\}$, let $l_{i} \geq 0$ be such that $x_{i}^{l_{i}} \mid r(\underline{x})$ but $\underline{x}^{l_{i}+1} \nmid r(\underline{x})$. We claim that $l_{i}\geq d_{i}$. Suppose by contrary that for some $j\in\{1,\ldots,n\}$, $l_{j}<d_{j}$, then $x_{j}^{l_{j}} \mid P(\underline{x},z)g_{2}(\underline{x},z)$. Since $P(\underline{x},z) $ is monic in $z$ we have $x_{j}^{l_{j}}\mid g_{2}(\underline{x},z)$. Let $g_{2}(\underline{x},z)= x_{j}^{l_{j}}\tilde{g}_{2}(\underline{x},z)$ and $r(\underline{x})= x_{j}^{l_{j}}\tilde{r}(\underline{x})$. Then from \eqref{eq: numeratorofDyexpansion}, we have
	\begin{equation}\label{eq: refinenumeratorofDyexpansion}
		\frac{\partial P(\underline{x},z)}{\partial z}\tilde{r}(\underline{x})= \underline{x}^{[d]-(0,\ldots,l_{j},\ldots,0)}g_{1}(\underline{x},z) + P(\underline{x},z)\tilde{g}_{2}(\underline{x},z).
	\end{equation} 	
	Putting $x_{j}=0$ in \eqref{eq: refinenumeratorofDyexpansion} we have
	\[\frac{\partial P(\underline{x},z)}{\partial z}\vert_{x_{j}=0} \tilde{r}(\underline{x})|_{x_{j}=0}=P(\underline{x},z)|_{x_{j}=0}\tilde{g_{2}}(\underline{x},z)|_{x_{j}=0}.\]
	Since both $\frac{P(\underline{x},z)}{\partial z}\vert_{x_{j}=0} $ and $P(\underline{x},z)|_{x_{j}=0}$ are non-zero and $\deg_{z}\frac{P(\underline{x},z)}{\partial z}\vert_{x_{j}=0} <\deg_{z}P(\underline{x},z)|_{x_{j}=0}$, we get $\tilde{r}(\underline{x})|_{x_{j}=0}=0$, which contradicts that $x_{j}^{l_{j}+1}\nmid r(\underline{x})$. Hence $l_{i}\geq d_{i},$ for any $i\in\{1,\ldots,n\}$. We write $[l]=(l_{1},\ldots,l_{n}).$
	
Write $r(\underline{x})=\underline{x}^{[l]}\overline{r}(\underline{x})$ and let $\alpha(\underline{x})= \underline{x}^{[l]-[d]}\overline{r}(\underline{x})$, i.e., $r(\underline{x})=\underline{x}^{[d]}\alpha(\underline{x})$. Now $D(t)\in A$ implies that
	\[\big(\frac{\partial Q(\underline{x},y, z)}{\partial y}\frac{\partial P(\underline{x},z)}{\partial z}+\frac{\partial Q(\underline{x},y,z)}{\partial z}\underline{x}^{[d]}\big)r(\underline{x})\in \underline{x}^{[d]+[e]}A,\]
	i.e., \[\big(\frac{\partial Q(\underline{x},y,z)}{\partial y}\frac{\partial P(\underline{x},z)}{\partial z}+\frac{\partial Q(\underline{x},y,z)}{\partial z}\underline{x}^{[d]}\big)\alpha(\underline{x})\in \underline{x}^{[e]}A.\]
	Now \begin{align*}&\big(\frac{\partial Q(\underline{x},y,z)}{\partial y}\frac{\partial P(\underline{x},z)}{\partial z}+\frac{\partial Q(\underline{x},y,z)}{\partial z}\underline{x}^{[d]}\big) \alpha(\underline{x})\\&\in \underline{x}^{[e]}A\cap k[\underline{x},y, z]= (\underline{x}^{[e]},
\underline{x}^{[d]}y-P(\underline{x},z), Q(\underline{x},y,z))k[\underline{x},y,z].
\end{align*}
Hence 	
			\begin{equation}\label{eq: numeratorofDtexpansion}
				\begin{split}
						&\big(\frac{\partial Q(\underline{X},Y,Z)}{\partial Y}\frac{\partial P(\underline{X},Z)}{\partial Z}+ \frac{\partial Q(\underline{X},Y, Z)}{\partial Z}\underline{X}^{[d]}\big)\alpha(\underline{X})\\&= \underline{X}^{[e]}\alpha_{1}+ (\underline{X}^{[d]}Y-P(\underline{X},Z))\alpha_{2}
					+ Q(\underline{X},Y, Z) \alpha_{3}
				\end{split}		
		\end{equation}
for some $\alpha_{1},\alpha_{2},\alpha_{3}\in k[\underline{X},Y,Z]$.

Since $r(\underline{x})=\underline{x}^{[d]}\alpha(\underline{x})$ and $\alpha(\underline{x})= \underline{x}^{[l]-[d]}\overline{r}(\underline{x})$, to show that $r(\underline{x})\in \underline{x}^{[d]+[e]}k[\underline{x}]$, it suffices to verify the following claim, and which will complete the proof of the theorem.

\textbf{Claim: }$[l]-[d]\geq [e]$.

We divide $\alpha_{1},\alpha_{2}$ by $Q$ and let $\beta_{1}, \beta_{2}$ be the  remainders respectively. Then $\deg_{Y}
\beta_{1}<s, \deg_{Y}$
$\beta_{2}<s$ and
	\begin{equation}\label{eq: numeratorofDtrightexpansion}
		\begin{split}
				&\underline{X}^{[e]}\alpha_{1}+ (\underline{X}^{[d]}Y-P(\underline{X},Z))\alpha_{2} +Q(\underline{X},Y,Z)\alpha_{3}\\&= \underline{X}^{[e]}\beta_{1}+(\underline{X}^{[d]}Y-P(\underline{X},Z))\beta_{2} +Q(\underline{X},Y,Z)\beta_{3}
		\end{split}	
	\end{equation}	
	for some $\beta_{3}\in k[\underline{X},Y,Z]$. Thus from \eqref{eq: numeratorofDtexpansion} and \eqref{eq: numeratorofDtrightexpansion}, we have
	\begin{equation}\label{eq: refinenumeratorofDtexpansion}
		\begin{split}
				&\big(\frac{\partial Q(\underline{X},Y,Z)}{\partial Y}\frac{\partial P(\underline{X},Z)}{\partial Z}+ \frac{\partial Q(\underline{X},Y, Z)}{\partial Z}\underline{X}^{[d]}\big)\alpha(\underline{X})\\&=\underline{X}^{[e]}\beta_{1}+(\underline{X}^{[d]}Y-P(\underline{X},Z))\beta_{2} +Q(\underline{X},Y,Z)\beta_{3}.
		\end{split}
	\end{equation}
To show the claim, we consider the following two subcases.
	
\textbf{Case 1:} $\deg_{Y}(Y\beta_{2})<s$.

In this case, since $\deg_{Y}Q\beta_{3} \geq s$ and
	\[\deg_{Y}\{(\frac{\partial Q(\underline{X},Y,Z)}{\partial Y}\frac{\partial P(\underline{X},Z)}{\partial Z}+ \frac{\partial Q(\underline{X},Y, Z)}{\partial Z}\underline{X}^{[d]})\alpha(\underline{X})-\underline{X}^{[e]}\beta_{1}\]
	\[-(\underline{X}^{[d]}Y-P(\underline{X},Z))\beta_{2}\}<s,\]
we have $\beta_{3}=0$ (since $Q(\underline{X},Y,Z)$ is nonzero), i.e.,
	\begin{equation}\label{eq: refinenumeratorofDtexpansion2}
		\begin{split}
				&\big(\frac{\partial Q(\underline{X},Y,Z)}{\partial Y}\frac{\partial P(\underline{X},Z)}{\partial Z}+ \frac{\partial Q(\underline{X},Y, Z)}{\partial Z}\underline{X}^{[d]}\big)\alpha(X)\\&=\underline{X}^{[e]}\beta_{1}+(\underline{X}^{[d]}Y-P(\underline{X},Z))\beta_{2}.	
		\end{split}
	\end{equation}
	Suppose that $l_{j}-d_{j}<e_{j}$ for some $j\in \{1,\ldots,n\}$. Since $\underline{x}^{[l]-[d]}$ divides the left side, we have
	\[\underline{X}^{[l]-[d]}\mid (\underline{X}^{[d]}Y- P(\underline{X},Z)\beta_{2}),\] and noticing that $P(\underline{X},Z)$ is monic in $Z$, we have then
	\[\underline{X}^{[l]}\mid \beta_{2}.\]
	Let $\beta_{2}= \underline{X}^{[l]-[d]}\tilde{\beta_{2}}$. Then from \eqref{eq: refinenumeratorofDtexpansion2} we have,
	\begin{equation}\label{eq: refinenumeratorofDtexpansion3}
		\begin{split}
				&\big(\frac{\partial Q(\underline{X},Y,Z)}{\partial Y}\frac{\partial P(\underline{X},Z)}{\partial Z}+ \frac{\partial Q(\underline{X},Y, Z)}{\partial Z}\underline{X}^{[d]}\big)\overline{r}(\underline{X})\\&= \underline{X}^{[e]-([l]-[d])}\beta_{1} +(\underline{X}^{[d]}Y-P(\underline{X},Z))\tilde{\beta_{2}}.
		\end{split}
	\end{equation}
	Putting $X_{j}=0$ in \eqref{eq: refinenumeratorofDtexpansion3} and comparing the leading coefficient of $Y$, we get \[\frac{\partial P(\underline{X},Z)}{\partial Z}\vert_{X_{j}=0}\overline{r}(\underline{X})|_{X_{j}=0}=P(\underline{X},z)|_{X_{j}=0}\tau\  \text{for some }\ \tau\in k[Z],\]
	which is a contradiction as both $\frac{\partial P(\underline{X},Z)}{\partial Z}\vert_{X_{j}=0} $ and $P(\underline{X},Z)|_{X_{j}=0}$ are non-zero polynomials and $\deg_{Z}P(\underline{X},z)|_{X_{j}=0}>\deg_{Z}\frac{\partial P(\underline{X},Z)}{\partial Z}\vert_{X_{j}=0} $. Hence $[l]-[d]\geq[e]$, i.e., the claim holds in Case 1.

\textbf{Case  2:} $\deg_{Y}(Y\beta_{2})=s$. In this case, let
	\begin{equation}\label{eq: Ybeta2expansion}
		Y\beta_{2}=c_{0}Q(\underline{X}, Y, Z)+ \gamma,
	\end{equation}
	where $c_{0}\in k[X, Z]$ is the leading coefficient of $Y$ in $\beta_{2}$ and $\gamma \in k[\underline{X},Y,Z]$. Note that
	\[\deg_{Y}(Y\beta_{2}-c_{0}Q(\underline{X},Y,Z))=\deg_{Y}\gamma<s.\]
	Then the equalities \eqref{eq: refinenumeratorofDtexpansion} and \eqref{eq: Ybeta2expansion} give
	\begin{align*}&\big(\frac{\partial Q(\underline{X},Y,Z)}{\partial Y}\frac{\partial P(\underline{X},Z)}{\partial Z}+ \frac{\partial Q(\underline{X},Y, Z)}{\partial Z}\underline{X}^{[d]}\big)\alpha(X)\\&=\underline{X}^{[e]}\beta_{1}+(\underline{X}^{[d]}\gamma-P(\underline{X},Z))\beta_{2}+Q(\underline{X},Y,Z)(\beta_{3} + c_{0}\underline{X}^{[d]}).
\end{align*}
	Since
	\[\deg_{Y}\{\big(\frac{\partial Q(\underline{X},Y,Z)}{\partial Y}\frac{\partial P(\underline{X},Z)}{\partial Z}+ \frac{\partial Q(\underline{X},Y, Z)}{\partial Z}\underline{X}^{[d]}\big)\alpha(X)-\underline{X}^{[e]}\beta_{1}\]\[-(\underline{X}^{[d]}\gamma-P(\underline{X},Z))\beta_{2}\}<s\] and \[\deg_{Y}(\beta_{3}+c_{0}\underline{X}^{[d]})Q\geq s,\]
	we have $(\beta_{3}+c_{0}\underline{X}^{[d]})=0$ (since $Q(\underline{X},Y,Z) $ is non-zero), and thus\[\big(\frac{\partial Q(\underline{X},Y,Z)}{\partial Y}\frac{\partial P(\underline{X},Z)}{\partial Z}+ \frac{\partial Q(\underline{X},Y, Z)}{\partial Z}\underline{X}^{[d]}\big)\underline{X}^{[l]-[d]}\overline{r}(\underline{X})\]\[=\underline{X}^{[e]}\beta_{1}+\underline{X}^{[d]}\gamma-P(\underline{X},Z)\beta_{2}.\]
For every $i\in\{1,\ldots,n\}$, let $\mu_{i}\geq 0$ be such that $X_{i}^{\mu_{i}}\mid \beta_{2}$ and $X_{i}^{\mu_{i}+1}\nmid \beta_{2}$. Let $\beta_{2}=X_{i}^{\mu_{i}}\tilde{\beta_{2}}$. Then $X_{i}^{\mu_{i}}\mid c_{0}$ as $c_{0}$ is the leading coefficient of $Y$ in $\beta_{2}$. Now $X_{i}^{\mu_{i}}\mid \gamma$ as $\gamma=(Y\beta_{2}-c_{0}Q(\underline{X},Y,Z))$. Write $\mu =(\mu_{1},\ldots, \mu_{n})$. Let $\gamma=X_{i}^{\mu_{i}}\tilde{\gamma}$. Then
	\begin{equation}\label{eq: refinenumeratorofDtexpansionincase2}
		\begin{split}
				&\big(\frac{\partial Q(\underline{X},Y,Z)}{\partial Y}\frac{\partial P(\underline{X},Z)}{\partial Z}+
			\frac{\partial Q(\underline{X},Y, Z)}{\partial Z}\underline{X}^{[d]}\big)\underline{X}^{[l]-[d]}\overline{r}(\underline{X})\\&=\underline{X}^{[e]}\beta_{1}+ \underline{X}^{[d]+(0,\ldots,\mu_{i},\ldots,0)}\tilde{\gamma}-P(\underline{X},Z)X_{i}^{\mu_{i}}\tilde{\beta_{2}}.
		\end{split}
	\end{equation}
	Now if $\mu_{j} <e_{j}$ for some $j\in \{1,\ldots,n\}$, then the least $X_{j}$-degree term of the right side of the equation is $-X_{j}^{\mu_{j}}\tilde{\beta_{2}}(\underline{X},Y,Z)|_{X_{j}=0}P(\underline{X},Z)|_{X_{j}=0}$ and that of the left side is $\frac{\partial Q(\underline{X},Y,Z)}{\partial Y}|_{X_{j}=0}\frac{\partial P(\underline{X},Z)}{\partial Z}|_{X_{j}=0}$
	$\overline{r}(\underline{X})|_{X_{j}=0}\underline{X}^{[l]-[d]}$.
	
	Hence $\mu_{j}=l_{j}-d_{j}$ and \begin{align*}&\tilde{\beta_{2}}(\underline{X},Y,Z)|_{X_{j}=0}P(\underline{X},Z)|_{X_{j}=0}\\&=\frac{\partial Q}{\partial Y}(\underline{X}, Y,Z)|_{X_{j}=0}\frac{\partial P}{\partial Z}(\underline{X}, Z)|_{X_{j}=0}\overline{r}(\underline{X})|_{X_{j}=0}\underline{X}^{(l_{1}-d_{1},\ldots,\mu_{j}, \ldots, l_{n}-d_{n})}.
\end{align*}
	Comparing the coefficients of the leading $Y$-degree from both sides, we can see that this is not possible. Hence $[\mu]\geq [e]$ and so the left side of \eqref{eq: refinenumeratorofDtexpansionincase2} is divisible by $\underline{X}^{[e]}$ and this is possible only if $[l]-[d]\geq [e]$.

Thus the claim is proved, which completes the proof of the theorem.
\end{proof}


\begin{thebibliography}{99}
	
\bibitem{2013The}
	B. Alhajjar,
	\newblock The filtration induced by a locally nilpotent derivation,
	\newblock {\em HAL Open Science}, ffhal-00872823, 2013.

\bibitem{cra}
	A.J. Crachiola,
	\newblock On automorphisms of Danielewski surfaces,
	\newblock {\em J. Algebraic Geom.},
	15(1): 1289--1298, 2006.
	

	\bibitem{dan89}
	W. Danielewski,
	\newblock On the cancellation problem and automorphism groups of affine
	algebraic varieties, preprint,
	\newblock {\em Warsaw}, 1989.
	
	\bibitem{derksen2001newton}
	H. Derksen, O. Hadas and L. Makar-Limanov,
	\newblock Newton polytopes of invariants of additive group actions,
	\newblock {\em Journal of Pure and Applied Algebra}, 156(2-3):187--197, 2001.
	
	\bibitem{dub07}
	A. Dubouloz,
	\newblock Additive group actions on Danielewski varieties and the cancellation
	problem,
	\newblock {\em Mathematische Zeitschrift}, 255(1):77--93, 2007.
	
	\bibitem{dub09}
	A. Dubouloz, P.M. Poloni,
	\newblock On a class of Danielewski surfaces in affine 3-space,
	\newblock {\em Journal of Algebra}, 321(7):1797--1812, 2009.
	
	\bibitem{dutta2012some}
	A.K. Dutta, N. Gupta and N. Onoda,
	\newblock Some patching results on algebras over two-dimensional factorial
	domains,
	\newblock {\em Journal of Pure and Applied Algebra}, 216(7):1667--1679, 2012.

\bibitem{eakin73} P. Eakin, W. Heinzer,
	\newblock  A cancellation problem for rings, in: Conference on Commutative Algebra, in:
\newblock {\em Lecture Notes in Mathematics}, vol.311, Springer, Berlin, 1973, pp. 61--77.
	
	\bibitem{fie94}
	K.H. Fieseler,
	\newblock On complex affine surfaces with $\mathbb{C}^+$-action,
	\newblock {\em Commentarii Mathematici Helvetici}, 69(1):5--27, 1994.
	
	\bibitem{gho23}
	P. Ghosh, N. Gupta,
	\newblock On generalised Danielewski and asanuma varieties,
	\newblock {\em Journal of Algebra}, 632:226--250, 2023.
	
	\bibitem{gupta2014zariski}
	N. Gupta,
	\newblock On zariski's cancellation problem in positive characteristic,
	\newblock {\em Advances in Mathematics}, 264:296--307, 2014.
	
	\bibitem{gupta2019double}
	N. Gupta, S. Sen,
	\newblock On double Danielewski surfaces and the cancellation problem,
	\newblock {\em Journal of Algebra}, 533:25--43, 2019.
	
	\bibitem{gupta2020locally}
	N. Gupta, S. Sen,
	\newblock Locally nilpotent derivations of double Danielewski surfaces,
	\newblock {\em Journal of Pure and Applied Algebra}, 224(4):106208, 2020.
	
	\bibitem{makar96}
	L. Makar-Limanov,
	\newblock On the hypersurface $x+x^2y+z^2+t^3=0$ in $\mathbb{C}^4$ or a
	$\mathbb{C}^3$-like threefold which is not $\mathbb{C}^3$,
	\newblock {\em Israel Journal of Mathematics}, 96:419--429, 1996.
	
	\bibitem{makar01}
	L. Makar-Limanov,
	\newblock On the group of automorphisms of a surface $x^{n}y= P(z)$,
	\newblock {\em Israel Journal of Mathematics}, 121:113--123, 2001.
	
\end{thebibliography}
\end{document}